\newtheorem{theorem}{Theorem}[section]
\newtheorem{counterexample}{Counterexample}[section]
\newtheorem{assumption}{Assumption}[section]
\newtheorem{lemma}{Lemma}[section]
\newtheorem{remark}{Remark}[section]
\begin{document}

\title{Ordering results for random maxima and minima from two dependent Kumaraswamy generalized distributed samples}
\author{{\large { Sangita {\bf Das}$^{a}$\thanks {Email address:
                sangitadas118@gmail.com}~ and Narayanaswamy Balakrishnan$^{b}$\thanks {Email address:  bala@mcmaster.ca}}} \\
    { \em \small {\it$^{a}$Theoretical Statistics and Mathematics Unit, Indian Statistical Institute, Bangalore-560059, India}}\\
    {\em \small {\it $^{b}$Department of Mathematics and Statistics, McMaster University, Hamilton, Ontario L8S 4K1,
				Canada}}\\\\
{ \em{\it To appear in Statistics.}}
}
\date{}
\maketitle
\begin{center}{\bf Abstract}
\end{center}
{ Let $\{X_{1},\ldots,X_{N_1}\}$ and  $\{Y_{1},\ldots,Y_{N_2}\}$ be two sequences of interdependent heterogeneous samples, where for $i=1,\ldots,N_{1},$ $X_{i}\sim \text{Kw-G}(x, \alpha_{i}, \gamma_{i};G)$ and for $i=1,\ldots,N_{2},$ $Y_{i}\sim \text{Kw-G}(x, \beta_{i}, \delta_{i};H),$ where $G$ and $H$ are baseline distributions in the Kumaraswamy generalized model and $N_1$ and $N_2$ are two positive integer-valued random variables, independently of $X_{i}'$s and $Y_{i}'$s, respectively.} In this article, we establish several stochastic orders such as usual stochastic, hazard rate, reversed hazard rate, dispersive and likelihood ratio orders  between the random maxima ($X_{{N_1}:{N_1}}$ and $Y_{{N_2}:{N_2}}$) and the random minima ($X_{{1}:{N_1}}$ and $X_{{1}:{N_2}}$), when the sample sizes are different and random (positive).

\noindent{\bf Keywords:} Stochastic orders; Kumaraswamy generalized family; Achimedean copula.
\\\\
{\bf Mathematics Subject Classification:} 60E15; 62G30; 60K10.
\section{Introduction}

Let $\{X_{1},X_{2},\ldots\}$ be a sequence of independent and identically distributed (i.i.d) random variables and $N$ be a positive integer-valued random variable independent of $X_{i}'$s. Also, let $X_{1:N}=min\{X_1,\ldots,X_{N}\}$ and $X_{N:{N}}=max\{X_1,\ldots,X_{N}\}$ be the random minima and maxima corresponding to the considered random variables.
For non-negative random variables $X_i,$ the random maxima $X_{N:N}$ naturally appears in reliability theory as the lifetime of a parallel system of identical components with lifetimes $X_1,\ldots, X_{N}.$ Also, in actuarial science, in a particular time period, the claims received by an insurer should be a sample of random size, and then $X_{N:N}$ denotes the largest claim amount in that period. The random variable $X_{1:{N}}$ arises naturally in transportation theory as the accident-free distance of a shipment of, say, explosives, where $N$ of them are defectives which may explode and cause an accident after $X_1,\ldots, X_{N}$ miles, respectively. Thus, comparing random maxima or minima stochastically is of great interest from a practical point of view. In this direction, only few works exist in the literature. Among them, \cite{shaked1997} considered two different i.i.d. samples $X_1,X_2,\ldots$ and $Y_1,Y_2,\ldots$ having the same (random) sample size ($N$) and showed that if $X_{i}\leq_{st}Y_{i},$ for $i=1,\ldots,N$ then $X_{1:{N}}\leq_{st}Y_{1:{N}}$ and $X_{N:{N}}\leq_{st}Y_{N:{N}}.$ Moreover, they showed that 
if Laplace transform and Laplace transform ratio orders hold between $N_1$ and $N_2,$ then the usual 
stochastic, hazard rate, reversed hazard rate and likelihood ratio orders are hold between 
$X_{1:{N_1}}$, $X_{1:{N_2}}$ and $X_{N_{1}:{N_1}}$, $X_{N_{2}:{N_2}}$, where $N_1$ and $N_2$ are two positive integer-valued random variables, independent of $X_{i}'$s. Subsequently, \cite{bartoszewicz2001} proved that if $X_1$ is smaller than $Y_1$ according to convex, star and super-additive orders, then $X_{1:N}$ and $X_{N:N}$ are smaller than $Y_{1:N}$ and $Y_{N:N},$ respectively. Then, \cite{Li2004} proved that right spread and increasing convex orderings hold between $X_{N:N}$ and $Y_{N:N},$ under some certain conditions. Also, they showed that total time on test transform 
and increasing concave orderings hold between $X_{1:N}$ and $Y_{1:N}$. Subsequently, \cite{Ahmad2007} presented reversed preservation property of right spread and total time on test transform orders under random minima and maxima. Recently, \cite{chowdhury2024} established several interesting results based on comparison of random minima and maxima from a random number of non-identical random variables. To get an overview of the results on stochastic comparisons of random maxima and minima, we refer to \cite{Nanda2008}. 

 {A random variable is said to follow Kumaraswamy’s distribution if its cumulative distribution function (CDF) is given by $$F(x)=1-(1-x^a)^b ,~x\in(0,1)$$ with two shape parameters $a>0$ and $b>0.$ This novel two-parameter distribution was first introduced by \cite{Kum1980} to model hydrological data. In the context of hydrological applications, the well known  probability distributions such as the Beta, Normal, Log-Normal, and Student-t often fail to provide adequate fit. For this reason, Kumaraswamy distribution has
received considerable attention in hydrology and related areas (see \cite{Sundar1989}, \cite{Fletcher1996} and  \cite{Seifi2000}).  A random variable $X$ is said to follow Kumaraswamy generalized (denoted by $X\sim\text{Kw-G}(x,\alpha,\gamma;G)$) distribution if its distribution function is given by 
\begin{equation}\label{KW-G}
F(x,\alpha,\gamma)=1-(1-G^{\alpha}(x))^{\gamma},~\alpha,~\gamma>0,~ x\in (-\infty, \infty),
\end{equation}
where $G(\cdot)$ is said to be the baseline distribution while $\alpha$ and $\gamma$ are two positive parameters whose role is to govern skewness and varying tail weights. {This distribution was first introduced by \cite{Cordeiro2011} to generate new families of distributions.} In \eqref{KW-G}, if we replace $G(x)$ by the distribution function of normal, Weibull and gamma, then we can obtain $\text{Kw}$-normal, $\text{Kw}$-Weibull and $\text{Kw}$-gamma distributions, respectively. In survival analysis and reliability engineering, in many different real life problems, censored data are common due to incomplete information in the observation of survival times. In this context, $\text{Kw-G}$ distribution is flexible and powerful distribution to model such censored data. It allows a baseline distribution (like Weibull, exponential, or Log-logistic) to capture more complex behaviors in real-world data possessing asymmetry, heavy tails and varied hazard functions. Recently, $\text{Kw-G}$ family of distributions has received considerable attention in the literature due to its wide applications (see \cite{Mohammed2014} and \cite{Pavlov2018}). In this regard, it will also be of interest to study stochastic properties of their order statistics. \cite{kundu2018w} developed different stochastic properties of minimum order statistics for $\text{Kw-G}$ model using majorization and related orders when the observations are independent. \cite{kayal2019kw} considered the same model and established ordering results for series as well as parallel systems based on multivariate chain majorization and vector chain majorization orders. Recently, \cite{kundu2021_kwg} established several comparison results concerning maximums of two independent heterogeneous $\text{Kw-G}$ random variables when each of the units in the sample experience a random shock.
Various ordering results for this model has been discussed in the literature by considering independent and heterogeneous variables for the case of fixed sample size. We therefore concentrate on dependent observations in the context of a random sample size thus differing from all previous works.}

 In many real-life situations, lifetimes of a system's components are dependent. Due to the sharing of common workloads such as heat and tasks, dependence between components may occur. For example, in the 1960's space development program, dependent failure was observed due sharing of heat. Because of inside temperature, the second guidance computer failed after few minutes following the failure of the first one during the time of reentry of Gemini spacecraft (see \cite{OTA2017376}). {As another example, we have the traditional two-units series
system subjected to shocks coming from three shock sources for its failure. Shock from
the first and second sources affect the first and second units, respectively, while
the last shock from the third source affects both units simultaneously. It is evident that the causes of failure are dependent in this system. Therefore, comparing the lifetimes of systems { with} dependent components has attracted a great deal of attention in the reliability  and insurance analysis literature. In this regard, the concept of Archimedean copulas is a versatile tool to model dependent observations. In particular, in insurance analysis, Archimedean copula provides
sufficient additional structure to allow strengthening as well as clarification of findings
on the nature of portfolio allocation vectors and of its comparative statics (see \cite{Hennessy2002} and \cite{Kularatne2021}).

 In our work, we use Archimedean copula functions to explore the dependence structure between the random number of observations. A copula is a function which connects the marginal distributions to reconstruct the joint distribution. Using the copula approach, one can easily model different relationships that can exist in different ranges of behaviour. In particular, to analyze the dependence structure, there are many advantages to use an Archimedean copula function, and these are as follows:
\begin{itemize}
    \item [(i)] Archimedean copulas enable us to model the marginal behaviour and the dependence structure separately;
 \item [(ii)]  The Archimedean copula function gives us the information about the degree
of dependence and also structure of dependence. It allows for asymmetric dependence as a linear correlation  does not always provide information about tail dependence;
\item [(iii)] To capture tail dependence, some specific Archimedean copulas having such interesting property (for example, Gumbel copula (upper tail-dependence) or Clayton copula (lower tail-dependence)) have been used in finance, insurance, and reliability problems;
\item [(iv)] To develop efficient estimation of model parameters, Archimedean copulas are good to use as they are relatively easy to simulate from when we are dealing with dependent observations.
\end{itemize}
   }

 In this article, we consider $\{X_{1},\ldots,X_{N_1}\}$ and  $\{Y_{1},\ldots,Y_{N_2}\}$ {as} two different interdependent and heterogeneous random variables, where $X_{i}\sim \text{Kw-G}(x, \alpha_{i}, \gamma_{i};G)$ and $Y_{i}\sim \text{Kw-G}(x, \beta_{i}, \delta_{i};H),$ with $G(\cdot)$ and $H(\cdot)$ being the baseline distributions, and $N_1$ and $N_2$ being two positive integer-valued random variables, independently of $X_{i}'$s  and $Y_{i}'$s, respectively. Several stochastic orders such as usual stochastic, hazard rate, reversed hazard rate, dispersive and likelihood ratio orders are considered here for the comparison results between the random maxima and minima when the sample sizes are different and random (positive). Here, we use Archimedean copula to model the dependence between the component lifetimes.
 
 In Section \ref{s1}, we recall some important definitions of stochastic orders and majorizations. The main results are presented in Section \ref{s21.}. In this section, we establish the comparison results between two random maxima and minima in { term} of usual stochastic, hazard rate, reversed hazard rate, dispersive and likelihood orders, when the variables are heterogeneous, {independent/interdependent} and follow $\text{Kw-G}$ family of distributions assembled with Archimedean (survival) copula { with} different generators and the sample sizes are positive random variables. {Moreover, some applications of our results in biostatistics and transportation theory are presented in Section \ref{app}}. Finally, some concluding remarks are made in Section \ref{c}.
 
 In this article, we concentrate on random variables defined on $(0,\infty)$.
The terms ‘increasing’ and ‘decreasing’ are used in nonstrict sense. Moreover, `$\overset{sign}{=}$' is used to
denote both sides of as equality having the same sign.
\section{Preliminaries}\label{s1}
Here, we review some important definitions and well-known concepts involving the notion of majorization and stochastic order. Let  $\boldsymbol{x} =
\left(x_{1},\cdots,x_{n}\right)$ and $\boldsymbol{y} =
\left(y_{1},\ldots,y_{n}\right)$ be two $n$ dimensional vectors such that $\boldsymbol{x}~,\boldsymbol{y}\in\mathbb{A}$. Here, $\mathbb{A} \subset \mathbb{R}^{n}$ and $\mathbb{R}^{n}$ is
an $n$-dimensional Euclidean space. Also, consider the order coordinates of the vectors $\boldsymbol{x}$ and $\boldsymbol{y}$ as  $x_{1:n}\leq \cdots \leq x_{n:n}$ and
$y_{1:n}\leq\cdots \leq y_{n:n},$ respectively. A vector $\boldsymbol{x}$ is said to be majorized by another vector $\boldsymbol{y}$ (denoted by $\boldsymbol{x}\preceq^{m} \boldsymbol{y}$) if, for each $l=1,\ldots,n-1$, $\sum_{i=1}^{l}x_{i:n}\geq \sum_{i=1}^{l}y_{i:n}$ and	$\sum_{i=1}^{n}x_{i:n}=\sum_{i=1}^{n}y_{i:n}$ hold. A vector $\boldsymbol{x}$ is said to be weakly submajorized by another vector $\boldsymbol{y}$ (denoted by $\boldsymbol{x}\preceq_{w} \boldsymbol{y}$) if, for each $l=1,\ldots,n$, $\sum_{i=l}^{n}x_{i:n}\leq \sum_{i=l}^{n}y_{i:n}$ holds. A vector $\boldsymbol{x}$ is said to be weakly supermajorized by another vector $\boldsymbol{y}$ (denoted by $\boldsymbol{x}\preceq^{w} \boldsymbol{y}$) if, for each $l=1,\ldots,n$, $\sum_{i=1}^{l}x_{i:n}\geq \sum_{i=1}^{l}y_{i:n}$ holds. It is important to note that $\boldsymbol{x}\preceq^{m} \boldsymbol{y}$ implies both  $\boldsymbol{x}\preceq_{w} \boldsymbol{y}$ and  $\boldsymbol{x}\preceq^{w} \boldsymbol{y}.$ But, the converse is not always true. For more details on majorization order and their applications, we refer \cite{Marshall2011}. 

Let $X_1$ and $X_2$ be two univariate random variables with
 density functions (PDFs) $f_{X_1}$ and $f_{X_2}$, distribution functions (CDFs) $F_{X_1}$ and $F_{X_2}$, survival functions $\bar
F_{X_1}$ and $\bar F_{X_2}$, hazard rate functions  $r_{X_1}=f_{X_1}/\bar
F_{X_1}$ and $ r_{X_2}=f_{X_2}/
\bar{F}_{X_2},$ and reversed hazard rate functions $\tilde{r}_{X_1}=f_{X_1}/ F_{X_1}$ and $\tilde r_{X_2}=f_{X_2}/
F_{X_2}$, respectively. Also, let $l_{X_1}$ and $l_{X_{2}}$ and $u_{X_1}$ and $u_{X_{2}}$  be the left and right end points of the supports of $X_{1}$ and $X_{2},$ respectively. Then, a random variable $X_1$ is said to be smaller than $X_2$ in the usual stochastic order (denoted by $X_1\leq_{st}X_2$) if	$\bar F_{X_1}(x)\leq\bar F_{X_2}(x)$, for all $x.$ A random variable $X_1$ is said to be smaller than $X_2$ in the hazard rate order (denoted by $X_1\leq_{hr}X_2$)
		if $\bar{F}_{X_2}/\bar{F}_{X_1}$ is increasing in x, for all $x\in(-\infty,\max(u_{X_1}, u_{X_2})).$ If the hazard rate exists, we can say $X_1\leq_{hr}X_2$ if and only if $ r_{X_1}(x)\geq  r_{X_2}(x)$, for all $x$.  A random variable $X_1$ is said to be smaller than $X_2$ in the  reversed hazard rate order (denoted by $X_1\leq_{rh}X_2$) if ${F}_{X_2}/{F}_{X_1}$ is increasing in x, for all $x\in(-\infty,\max(u_{X_1}, u_{X_2})).$ If the reversed hazard rate exists, we can say $X_1\leq_{rh}X_2$ if and only if $ \tilde{r}_{X_1}(x)\leq  \tilde{r}_{X_2}(x)$, for all $x$. A random variable $X_1$ is said to be smaller than $X_2$ in the likelihood ratio order (denoted by $X_1\leq_{lr}X_2$) if $f_{X_2}(x)/f_{X_1}(x)$ is increasing in $x$ for all $x\in(-\infty,\max(u_{X_1}, u_{X_2}))$. A random variable $X_1$ is said to be smaller than $X_2$ in the
   dispersive order (denoted by $X_{1}\le_{disp}X_2{}$) if
		$F^{-1}_{X_{1}}(\beta)
		-F^{-1}_{X_{1}}(\alpha)\le F^{-1}_{X_{2}}(\beta)
		-F^{-1}_{X_{2}}(\alpha)\text{ whenever }0<\alpha\leq\beta<1,$ where $F^{-1}_{X_{1}}(x)$ and $F^{-1}_{X_{2}}(x)$ are the right-continuous inverses of $F_{X_{1}}(x)$ and $F_{X_{2}}(x),$ respectively. It is known that $\leq_{lr}\Rightarrow\leq_{hr}(\leq_{rh})\Rightarrow\leq_{st}.$ Using these notions, we can easily derive useful bounds for the survival functions as well as hazard rate functions that will be useful in reliability theory and survival analysis (see \cite{pledger1971comparisons}). It is also important to note that dispersive ordering has been used by several authors to obtain bound for some inferential problems (see \cite{Jeon2006}). We refer to \cite{shaked2007stochastic} for a detailed discussion on stochastic orderings. 

	The idea of Archimedean copula is one of the tools to model dependent data. Let $F$ and $\bar F$ be the joint distribution function and joint survival function of a random vector $\boldsymbol{X}=(X_1,\cdots,X_n)$. Also, let there exist some functions $C(\boldsymbol{v}):[0,1]^n\rightarrow [0,1]$ and  
	$\hat {C}(\boldsymbol{v}):[0,1]^n\rightarrow [0,1]$ such that for all $ x_i,~i\in \mathcal I_n, $ where $\mathcal I_n$ is the index set, $ F(x_1,\cdots,x_n)=C(F_1(x_1),\cdots,F_n(x_n))$ and
	$\bar{F}(x_1,\cdots,x_n)=\hat{C}(\bar{F_1}(x_1),\cdots,\bar{F_n}(x_n))$ hold, where $\boldsymbol{v}=(v_1,\cdots,v_n)$. Then, $C(\boldsymbol{v})$ and $\hat{C}(\boldsymbol{v})$ are said to be the  copula and survival copula of $\boldsymbol{X}$, respectively. Here, $F_1,\cdots,F_n$ and $\bar{F_1},\cdots,\bar{F_n}$ are the univariate marginal distribution functions and survival functions of the random variables $X_1,\cdots,X_n$, respectively.
	Let $\psi:[0,\infty)\rightarrow[0,1]$ be a non-increasing and continuous function, satisfying $\psi(0)=1$ and $\psi(\infty)=0.$ Also, let $\phi={\psi}^{-1}=\text{sup}\{x\in \mathcal R:\psi(x)>v\}$ be the right continuous inverse. Further, suppose $\psi$ satisfies the conditions (i)$(-1)^i{\psi}^{i}(x)\geq 0,~ i=0,1,\cdots,d-2,$ and  (ii)$(-1)^{d-2}{\psi}^{d-2}$ is non-increasing and convex. These imply that the generator $\psi$ is $d$ monotone. Then, a copula $C_{\psi}$ is said to be an Archimedean copula if it can be written as $C_{\psi}(v_1,\cdots,v_n)=\psi({\psi^{-1}(v_1)}+\cdots+\psi^{-1}(v_n)),~\text{ for all } v_i\in[0,1],~i\in\mathcal{I}_n.$ For extensive and comprehensive details on Archimedean copulas, one may refer to \cite{nelsen2006introduction}.
 Denote
$(i)~\mathcal{D}_{+}=\{(x_1,\cdots,x_n):x_{1}\geq
x_{2}\geq\cdots\geq x_{n}>0\},$ $(ii)~\mathcal{E}_{+}=\{(x_1,\cdots,x_n):0<x_{1}\leq
x_{2}\leq\cdots\leq x_{n}\},$ $(iii) \boldsymbol{1}_{n}=(1,\cdots,1),$ and $(iv) h'( z)=\frac{d h(z)}{d z}.$ These notations will be consistently used henceforth.

\section{Main results}\label{s21.}
For baseline distributions $G(\cdot)$ and $H(\cdot),$ let $\{X_{1},\cdots,X_{n}\}$ and $\{Y_{1},\cdots,Y_{n}\}$ be two sets of $n$ interdependent variables coupled with Archimedean (survival) copula { with} generators $\psi_1$ and $\psi_2,$ where, for $i=1,\cdots,n,$ $X_{i}$'s follow $\text{Kw-G}$ distribution with parameters $\alpha_{i}$, $\gamma_{i},$ and $Y_{i}$'s follow $\text{Kw-G}$ distribution with parameters $\beta_{i}$ and $ \delta_{i}.$ We denote $\boldsymbol{X}\sim \text{Kw-G}(x,\boldsymbol{\alpha},\boldsymbol{\gamma};G)$ and 
$\boldsymbol{Y}\sim \text{Kw-G}(x,\boldsymbol{\beta},\boldsymbol{\delta};H).$
Further, let $X_{n:n}$ and $X_{1:n},$ $Y_{n:n}$ and $Y_{1:n}$ be the largest and the smallest order statistics corresponding to the observations $\{X_{1},\cdots,X_{n}\}$ and $\{Y_{1},\cdots,Y_{n}\},$ respectively. Also, let $N_1$ and $N_{2}$ be two positive integer-valued discrete random variables independently of $X_i'$s and $Y_i's$, respectively. Then, under the above assumptions, the distribution function of $X_{N_{1} : N_{1}}$ and the reliability function of $X_{1 : N_{1}}$ can be expressed as
\begin{equation*}
  {F}_{X_{{N_{1}}:{N_{1}}}}(x) =\sum_{m=1}^{n}P({X_{N_{1}:{N_{1}}}< x}|N_{1}=m)P(N_{1} =m)
    =\sum_{m=1}^{n}P({X_{{m}:{m}}< x})P(N_{1} =m),
\end{equation*}
\begin{equation*}
    \bar{F}_{X_{1:{N_{1}}}}(x) =\sum_{m=1}^{n}P({X_{1:{N_{1}}}> x}|N_{1}=m)P(N_{1} =m)
    =\sum_{m=1}^{n}P({X_{{1}:{m}}> x})P(N_{1} =m),
\end{equation*}
where
\begin{equation*}
    P({X_{{n}:{n}}< x})=\psi_{1}\left(\sum\limits_{i=1}^{n}\phi_{1}\{1-\left(1-G^{\alpha_{i}}(x)\right)^{\gamma_{i}}\}\right)  \text{  and  } P({X_{{1}:{n}}> x})=\psi_{1}\left(\sum\limits_{i=1}^{n}\phi_{1}\{\left(1-G^{\alpha_{i}}(x)\right)^{\gamma_{i}}\}\right) .
 \end{equation*}
 Similarly, we can present the distribution function of $X_{N_{2} : N_{2}}$ and the reliability function of $X_{1 : N_{2}}.$
Now, the following assumptions will be made throughout this section.
\begin{assumption}\label{ass1}
	Suppose $\{X_{1},\cdots,X_{n}\}~[\{Y_{1},\cdots,Y_{n}\}]$ are $n$ interdependent variables coupled with Archimedean (survival) copula { with} generator $\psi_1~[\psi_2],$ where, for $i=1,\cdots,n,$ $X_{i}\sim \text{Kw-G}(x,\alpha_{i},\gamma_{i};G)$
	$[Y_{i}\sim \text{Kw-G}(x,\beta_{i},\delta_{i};H)]$ and $G~[H]$ is the baseline distribution function. Further, let $N_1~[N_2]$ be a positive integer-valued random variable independently of $X_{i}'~[Y_{i}']$. Also, let us denote $r_{g}(x)=\frac{G'(x)}{\bar{G}(x)}$ and $r_{h}(x)=\frac{H'(x)}{\bar{H}(x)}.$
	
\end{assumption}
We start this section by the following theorem which states that under some sufficient conditions, if $\boldsymbol{\alpha}\succeq^{w}\boldsymbol{\beta},$ then the random maxima $X_{{N_1}:{N_1}}$ is stochastically larger than $Y_{{N_2}:{N_2}}.$
\begin{theorem}\label{th1}
	Let Assumption \ref{ass1} hold with $\boldsymbol{\alpha},~\boldsymbol{\beta}\in\mathcal{E}_+(\mathcal{D}_+),$ $\boldsymbol{\gamma}=\boldsymbol{\delta}(\geq \boldsymbol{1}_{n}),$ $G\leq H$ . Also, suppose $\phi_{2}\circ\psi_{1}$ is super-additive and $\psi_1$ or $\psi_2$ is log-concave. Then, $\boldsymbol{\alpha}\succeq^{w}\boldsymbol{\beta}\Rightarrow X_{N_{1}:N_{1}}\geq_{st} Y_{N_{2}:N_{2}},$ whenever $N_{1}\leq_{st}N_{2}$.
\end{theorem}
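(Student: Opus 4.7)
My plan is to first establish the fixed-sample-size dominance $X_{m:m}\geq_{st}Y_{m:m}$ for every $m$, and then lift it to the random sample-size statement via a mixture argument exploiting $N_1\leq_{st}N_2$ and the log-concavity hypothesis. Starting from
\begin{align*}
P(X_{m:m}<x) &= \psi_1\Big(\sum_{i=1}^m \phi_1\{1-(1-G^{\alpha_i}(x))^{\gamma_i}\}\Big),\\
P(Y_{m:m}<x) &= \psi_2\Big(\sum_{i=1}^m \phi_2\{1-(1-H^{\beta_i}(x))^{\gamma_i}\}\Big),
\end{align*}
I would first use the super-additivity of $\phi_2\circ\psi_1$ to bridge the two Archimedean generators: iterating $(\phi_2\psi_1)(s+t)\geq(\phi_2\psi_1)(s)+(\phi_2\psi_1)(t)$ and composing with the non-increasing $\psi_2$ (together with $\psi_2\circ\phi_2=\mathrm{id}$) yields $P(X_{m:m}<x)\leq\psi_2\big(\sum_i\phi_2(u_i^X)\big)$ with $u_i^X=1-(1-G^{\alpha_i}(x))^{\gamma_i}$. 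It then suffices to show $\sum_i\phi_2(u_i^X)\geq\sum_i\phi_2(u_i^Y)$, and I would split this into (a) replacing $G$ by $H$ on the $X$ side, which is possible since $G\leq H$ and $\phi_2$ is non-increasing, and (b) invoking $\boldsymbol{\alpha}\succeq^w\boldsymbol{\beta}$ and $\boldsymbol{\gamma}=\boldsymbol{\delta}\geq\boldsymbol{1}_n$ via an appropriate Schur-type property of the map $\boldsymbol{a}\mapsto\sum_i\phi_2(1-(1-H^{a_i}(x))^{\gamma_i})$. The sorting assumption $\boldsymbol{\alpha},\boldsymbol{\beta}\in\mathcal{E}_+$ (or $\mathcal{D}_+$) enters here to pin down the pairing of coordinates so that the relevant partial-derivative expression has a definite sign, while the condition $\gamma_i\geq 1$ is what signs the relevant second-order term.

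For the random-$N$ lifting, I would write
\[
F_{X_{N_1:N_1}}(x)=\sum_{m}F_{X_{m:m}}(x)P(N_1=m),\qquad F_{Y_{N_2:N_2}}(x)=\sum_{m}F_{Y_{m:m}}(x)P(N_2=m),
\]
apply the fixed-$m$ bound pointwise, and then use the log-concavity of $\psi_1$ or $\psi_2$ to extract the monotonicity (in $m$) of $F_{Y_{m:m}}(x)$ needed to transfer the stochastic order on the counts through the mixture. Concretely, log-concavity of the generator translates, through the copula representation $\psi_1\big(\sum_{i=1}^m\phi_1(u_i)\big)$, into the required behaviour in the discrete index $m$, which together with $N_1\leq_{st}N_2$ produces the final inequality $F_{X_{N_1:N_1}}(x)\leq F_{Y_{N_2:N_2}}(x)$.

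The main obstacle I anticipate is step (b) in the fixed-$m$ reduction: checking the Schur-type inequality for $\boldsymbol{a}\mapsto\sum_i\phi_2\big(1-(1-H^{a_i}(x))^{\gamma_i}\big)$ under weak supermajorization and for an arbitrary admissible Archimedean generator $\psi_2$. This forces a careful computation of the first and second derivatives of $a\mapsto\phi_2(1-(1-H^a(x))^\gamma)$ and a sign analysis that leans simultaneously on $\gamma\geq 1$ and on the fixed ordering of coordinates encoded by $\mathcal{E}_+$ or $\mathcal{D}_+$. Once that Schur inequality is in place, the super-additivity bridge between generators, the baseline replacement $G\leadsto H$, and the log-concavity-based mixture lifting all proceed along standard lines.
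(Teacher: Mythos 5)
Your architecture matches the paper's: the fixed-$m$ comparison via (i) the super-additivity bridge $\psi_1\bigl(\sum_i\phi_1(u_i)\bigr)\leq\psi_2\bigl(\sum_i\phi_2(u_i)\bigr)$, (ii) the baseline swap using $G\leq H$ and monotonicity of the copula in its arguments, and (iii) a Schur-type argument in $\boldsymbol{\alpha}$ under weak supermajorization (the paper runs (iii) on the composed function $\mathcal{L}(\boldsymbol{\alpha},\boldsymbol{\gamma},\psi_2;H)$ via Theorem A.8 of Marshall--Olkin; you run it on the inner sum $\sum_i\phi_2(u_i)$, which is equivalent after applying the decreasing $\psi_2$), followed by the same mixture computation for random $N$. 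The genuine gap is where you spend the log-concavity hypothesis. You defer it to the lifting step, ``to extract the monotonicity (in $m$) of $F_{Y_{m:m}}(x)$''; but that monotonicity is free, since $F_{Y_{m:m}}(x)=\psi_2\bigl(\sum_{i=1}^{m}\phi_2(u_i)\bigr)$ is decreasing in $m$ simply because each $\phi_2(u_i)\geq 0$ and $\psi_2$ is decreasing---no generator condition is needed there. Meanwhile the step you yourself flag as the main obstacle, the Schur inequality for $\boldsymbol{a}\mapsto\sum_i\phi_2\bigl(1-(1-H^{a_i}(x))^{\gamma_i}\bigr)$, is exactly where log-concavity is indispensable: the partial derivative in $a_i$ is $s_i\chi^*(a_i,\gamma_i)/\psi_2'(\phi_2(s_i))$ with $s_i=1-(1-H^{a_i}(x))^{\gamma_i}$, and the factor $s/\psi_2'(\phi_2(s))=\psi_2(t)/\psi_2'(t)\big|_{t=\phi_2(s)}$ has the monotonicity in $s$ needed to sign the difference of the $i$th and $j$th terms precisely when $\psi_2'/\psi_2$ is monotone, i.e.\ when $\psi_2$ is log-concave. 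The paper's first counterexample (log-convex Gumbel generators, all other hypotheses satisfied) shows the conclusion genuinely fails without this, so your plan as written leaves the hardest step without the hypothesis that makes it work.

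A secondary caution on the lifting: you assert that monotonicity in $m$ ``together with $N_1\leq_{st}N_2$ produces the final inequality,'' but since $m\mapsto F_{Y_{m:m}}(x)$ is \emph{decreasing}, $N_1\leq_{st}N_2$ yields $\sum_m F_{Y_{m:m}}(x)P(N_1=m)\geq\sum_m F_{Y_{m:m}}(x)P(N_2=m)$, which is the wrong direction for concluding $F_{X_{N_1:N_1}}(x)\leq F_{Y_{N_2:N_2}}(x)$. The paper's own displayed chain has the same directional issue (the natural hypothesis for this conclusion on maxima is $N_1\geq_{st}N_2$, consistent with Theorem \ref{th2}), so you are in good company, but you should check this sign rather than assert it.
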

\begin{proof}
The main step in proving the theorem is to establish that
$$\boldsymbol{\alpha}\succeq^{w}\boldsymbol{\beta}\Rightarrow P(X_{n:n}<x)\leq P(Y_{n:n}<x)\Rightarrow \mathcal{L} (\boldsymbol{\alpha}, \boldsymbol{\gamma},\psi_{1};G)\leq \mathcal{L}(\boldsymbol{\beta}, \boldsymbol{\gamma},\psi_{2};H),$$ where
$\mathcal{L}(\boldsymbol{\alpha}, \boldsymbol{\gamma},\psi_{1};G)=\psi_{1}\left(\sum\limits_{i=1}^{n}\phi_{1}\{1-\left(1-G^{\alpha_{i}}(x)\right)^{\gamma_{i}}\}\right)$
	and
	$\mathcal{L}(\boldsymbol{\beta}, \boldsymbol{\gamma},\psi_{2};H)=\psi_{2}\left(\sum\limits_{i=1}^{n}\phi_{2}\{1-\left(1-H^{\beta_{i}}(x)\right)^{\gamma_{i}}\}\right).$ Then, using the fact that $N_{1}\leq_{st}N_{2}$, we will complete the remaining part. From the given conditions, 
	$$\mathcal{L}(\boldsymbol{\alpha}, \boldsymbol{\gamma},\psi_{1};G)\leq\mathcal{L}(\boldsymbol{\alpha}, \boldsymbol{\gamma},\psi_{2};G)\leq\mathcal{L}(\boldsymbol{\alpha}, \boldsymbol{\gamma},\psi_{2};H).$$
	Therefore, we only need to establish that	$$\boldsymbol{\alpha}\succeq^{w}\boldsymbol{\beta}\Rightarrow \mathcal{L}(\boldsymbol{\alpha}, \boldsymbol{\gamma},\psi_{2};H)\leq \mathcal{L}(\boldsymbol{\beta}, \boldsymbol{\gamma},\psi_{2};H).$$
According to Theorem $A.8$ of \cite{Marshall2011}, the above inequality will be fulfilled if we show that $\mathcal{L}(\boldsymbol{\alpha}, \boldsymbol{\gamma},\psi_{2};H)$ is decreasing and Schur-convex in $\boldsymbol{\alpha}\in\mathcal{E}+(\mathcal{D}_{+})$. To this aim, consider the case when $\boldsymbol{\alpha}\in \mathcal{E}_{+}.$ One can also prove the other part in a similar fashion. Taking partial derivative of $\mathcal{L}(\boldsymbol{\alpha}, \boldsymbol{\gamma},\psi_{2};H)$ with respect to $\alpha_i$, we have
	for $1\leq i\leq j\leq n,$
	\begin{equation}\label{eq-sconcave-max-alpha_1}
		\frac{\partial \mathcal{L}(\boldsymbol{\alpha}, \boldsymbol{\gamma},\psi_{2};H)}{\partial\alpha_{i}}-\frac{\partial \mathcal{L}(\boldsymbol{\alpha}, \boldsymbol{\gamma},\psi_{2};H)}{\partial\alpha_{j}}\Rightarrow\mathcal{L}'(\boldsymbol{\alpha}, \boldsymbol{\gamma},\psi_{2};H)\left[\frac{s_{i}\chi^*(\alpha_i,\gamma_i)}{\psi'_2(\phi_2\{s_{i}\})}-\frac{s_{j}\chi^*(\alpha_j,\gamma_j)}{\psi'_2(\phi_2\{s_{j}\})}\right],
	\end{equation}
	where $s_{i}=1-\left(1-G^{\alpha_{i}}(x)\right)^{\gamma_{i}}$ and $\chi^*(\alpha_{i},\gamma_{i})=\frac{\gamma_{i}\ln G(x)G^{\alpha_{i}}(x)(1-G^{\alpha_{i}(x)})^{\gamma_{i}-1}}{1-(1-G^{\alpha_{i}}(x))^{\gamma_i}}.$ Note that $s_i$ is decreasing in $\alpha_i$, increasing in $\gamma_{i}$ and by Lemma $3.1$ of \cite{balakrishnan2015stochastic}, $\chi^*(\alpha_i,\gamma_i)$ is decreasing in $\alpha_{i}$ and increasing in $\gamma_{i},$ for $i=1,\ldots,n.$ Therefore, by the given conditions, we can say that \eqref{eq-sconcave-max-alpha_1} is non-positive. Also, $\mathcal{L}(\boldsymbol{\alpha}, \boldsymbol{\gamma},\psi_{2};H)$ is increasing in $\boldsymbol{\alpha}\in \mathcal{E}_{+}$ by Lemma $3.3$ of \cite{kundu2016some}. 
 Now, $N_{1}\leq_{st}N_{2}$ yields
\begin{align}
    {F}_{X_{{N_{1}}:{N_{1}}}}(x) &=\sum_{m=1}^{n}P({X_{{m}:{m}}< x})P(N_{1} =m)\nonumber\\
    &\leq\sum_{m=1}^{n}P({X_{{m}:{m}}< x})P(N_{2} =m)\nonumber\\
    &\leq\sum_{m=1}^{n}P({Y_{{m}:{m}}< x})P(N_{2} =m)\nonumber\\
    &\leq {F}_{Y_{{N_{2}}:{N_{2}}}}(x),
\end{align}
which completes the proof of the theorem.
\end{proof}

Next, we consider a counterexample to show that the condition $\psi_1$ or $\psi_2$ is log-concave plays a crucial role for the {result} in Theorem \ref{th1} to hold.

\begin{counterexample}\label{cex1.1}
Let, $X_i \sim \text{Kw-G}(x,\alpha_i,\gamma_i;G)$ and $Y_i \sim \text{Kw-G}(x,\beta_{i},\gamma_i;H),$ for $i=1,2,3,4,5.$ Set $(\gamma_1,\gamma_2 ,\gamma_3,\gamma_4,\gamma_5)=(1.2, 1.5, 1.9,2,2.1)$, $(\alpha_1, \alpha_2,\alpha_3,\alpha_4,\alpha_5)=(1.1, 0.001,0.0001,0.00001,0.00001)$ and $(\beta_1, \beta_2, $
$\beta_3,\beta_4,\beta_5)=(2.1, 3.001,5.0001,0.001,0.0001)$. Clearly, $(\alpha_1, \alpha_2,\alpha_3,\alpha_4,\alpha_5)\succeq^{w}(\beta_1, \beta_2, \beta_3,\beta_4,\beta_5)$. Let $G(x)=e^{-\frac{1}{x}}$ and $H(x)=1-e^{-x},$ for $x>0.$ Also, let $(X_{1},X_{2},X_{3})$ be selected with probability $P(N_{1}=3)=p(3)=1/5,$ $(X_{1},X_{2},X_{3},X_{4})$ be selected with probability $P(N_{1}=4)=p(4)=2/5$ and, $(X_{1},X_{2},X_{3},X_{4},X_{5})$ be selected with probability $P(N_{1}=5)=p(5)=2/5;$ further let $(Y_{1},Y_{2},Y_{3})$ be selected with probability $P(N_{2}=3)=p(3)=1/5,$ $(Y_{1},Y_{2},Y_{3}, Y_{4})$ be selected with probability $P(N_{2}=4)=p(4)=3/5$ and $(Y_{1},Y_{2},Y_{3}, Y_{4},Y_{5})$ be selected with probability $P(N_{2}=5)=p(5)=1/5.$ Here, $N_{1}\leq_{st}N_{2}.$ Let us use the Gumbel copula { with} generators $\psi_1(x) = e^{-x^{\frac{1}{a}}}$ and $\psi_2 (x)= e^{-x^{\frac{1}{b}}},~x>0,$ with $a=2$ and $b=2.1$. Clearly, all the conditions are satisfied except $\psi_1$ and $\psi_2$ are log-concave. Figure $1(a)$ displays that the difference between ${F}_{X_{{N_{1}}:{N_1}}}(x)$ and ${F}_{Y_{{N_{2}:N_{2}}}}(x)$ crosses $x$ axis, which violates the result in Theorem \ref{th1}.

\begin{figure}[h]
		\begin{center}
			\subfigure[]{\label{c3}\includegraphics[height=2.0in]{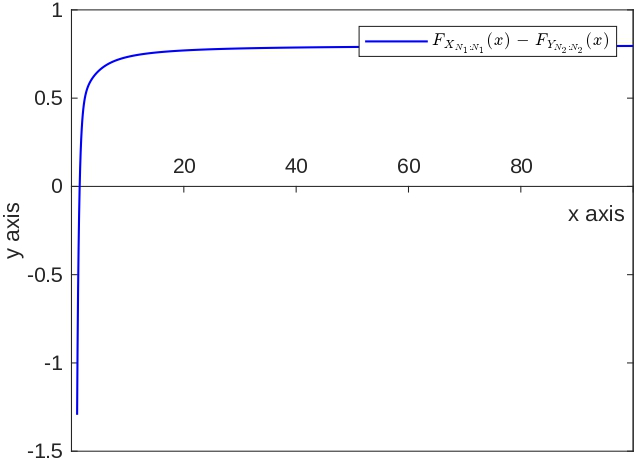}}
			\subfigure[]{\label{c3.}\includegraphics[height=2.0in]{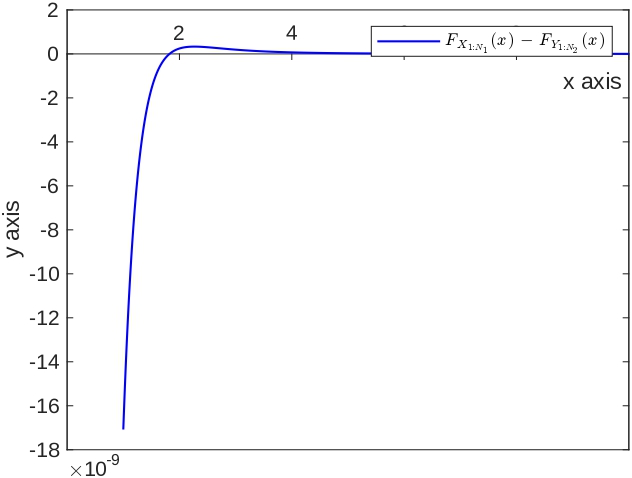}}
			\caption{(a) Plot of $\bar{F}_{X_{{N_1}:{N_{1}}}}(x)-\bar{F}_{Y_{{N_2}:{N_2}}}(x)$ in Counterexample \ref{cex1.1} when $\psi_1$ and $\psi_2$ are both log-concave; (b) Plot of $\bar{F}_{X_{{1}:{N_{1}}}}(x)-\bar{F}_{Y_{{1}:{N_2}}}(x)$ in Counterexample \ref{cex1.3}.}
		\end{center}
	\end{figure}
 \end{counterexample}
As in Theorem \ref{th1}, the following theorem shows that if $\boldsymbol{\delta}\succeq^{w}\boldsymbol{\gamma},$ then the opposite inequality holds between the two random maxima $X_{{N_1}:{N_1}}$ and $Y_{{N_2}:{N_2}}.$ 

\begin{theorem}\label{th2}
	Let Assumption \ref{ass1} hold with $\boldsymbol{\alpha}=\boldsymbol{\beta},$ $\boldsymbol{\alpha},~\boldsymbol{\gamma},~\boldsymbol{\delta}\in\mathcal{E}_+(\mathcal{D}_+)$ and $G\leq H$. Also, suppose $\phi_{2}\circ\psi_{1}$ is super-additive and $\psi_1$ or $\psi_2$ is log-concave.Then, $\boldsymbol{\delta}\succeq^{w}\boldsymbol{\gamma}\Rightarrow X_{{N_{1}}:{N_{1}}}\leq_{st} Y_{{N_{2}}:{N_{2}}},$ whenever $N_{1}\geq_{st}N_{2}$.
\end{theorem}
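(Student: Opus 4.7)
My plan is to follow the template of the proof of Theorem \ref{th1} essentially verbatim, with the roles of $(\boldsymbol{\alpha},\boldsymbol{\beta})$ and $(\boldsymbol{\gamma},\boldsymbol{\delta})$ interchanged and the direction of the $N$-ordering reversed. The central step is the fixed-$n$ comparison
\[
\boldsymbol{\delta}\succeq^{w}\boldsymbol{\gamma}\ \Rightarrow\ \mathcal{L}(\boldsymbol{\alpha},\boldsymbol{\gamma},\psi_{1};G)\leq\mathcal{L}(\boldsymbol{\alpha},\boldsymbol{\delta},\psi_{2};H),
\]
with $\mathcal{L}$ defined as in the proof of Theorem \ref{th1}; this is then lifted to random sample sizes through the mixture representation of $F_{X_{N_{1}:N_{1}}}$.

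The reduction itself proceeds as in Theorem \ref{th1}. Under $G\leq H$, super-additivity of $\phi_{2}\circ\psi_{1}$, and log-concavity of $\psi_{1}$ or $\psi_{2}$, I obtain
\[
\mathcal{L}(\boldsymbol{\alpha},\boldsymbol{\gamma},\psi_{1};G)\leq\mathcal{L}(\boldsymbol{\alpha},\boldsymbol{\gamma},\psi_{2};G)\leq\mathcal{L}(\boldsymbol{\alpha},\boldsymbol{\gamma},\psi_{2};H),
\]
and, using $\boldsymbol{\alpha}=\boldsymbol{\beta}$, the problem reduces to showing
\[
\boldsymbol{\delta}\succeq^{w}\boldsymbol{\gamma}\ \Rightarrow\ \mathcal{L}(\boldsymbol{\alpha},\boldsymbol{\gamma},\psi_{2};H)\leq\mathcal{L}(\boldsymbol{\alpha},\boldsymbol{\delta},\psi_{2};H).
\]
By Theorem A.8 of \cite{Marshall2011}, this will follow once $\mathcal{L}(\boldsymbol{\alpha},\,\cdot\,,\psi_{2};H)$ is shown to be decreasing and Schur-convex in $\boldsymbol{\gamma}\in\mathcal{E}_{+}(\mathcal{D}_{+})$; coordinatewise monotonicity in each $\gamma_{i}$ will be analogous to Lemma 3.3 of \cite{kundu2016some}.

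The Schur-convexity check is the technical heart. A direct computation of the Schur--Ostrowski-type difference gives, for $1\leq i\leq j\leq n$,
\[
\frac{\partial\mathcal{L}(\boldsymbol{\alpha},\boldsymbol{\gamma},\psi_{2};H)}{\partial\gamma_{i}}-\frac{\partial\mathcal{L}(\boldsymbol{\alpha},\boldsymbol{\gamma},\psi_{2};H)}{\partial\gamma_{j}}\ \overset{sign}{=}\ \mathcal{L}'(\boldsymbol{\alpha},\boldsymbol{\gamma},\psi_{2};H)\left[\frac{\xi^{*}(\alpha_{i},\gamma_{i})}{\psi'_{2}(\phi_{2}\{s_{i}\})}-\frac{\xi^{*}(\alpha_{j},\gamma_{j})}{\psi'_{2}(\phi_{2}\{s_{j}\})}\right],
\]
where $s_{i}=1-(1-H^{\alpha_{i}}(x))^{\gamma_{i}}$ and $\xi^{*}(\alpha_{i},\gamma_{i})=(1-H^{\alpha_{i}}(x))^{\gamma_{i}}\bigl[-\ln(1-H^{\alpha_{i}}(x))\bigr]$ arises from $-\partial s_{i}/\partial\gamma_{i}$. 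The required sign on the bracket should follow from a short monotonicity analysis of $s_{i}$ and of $\xi^{*}(\alpha_{i},\gamma_{i})$ in $\alpha_{i}$ and $\gamma_{i}$, playing the role for $\boldsymbol{\gamma}$ that $\chi^{*}$ did for $\boldsymbol{\alpha}$ in Theorem \ref{th1} and using an argument in the spirit of Lemma 3.1 of \cite{balakrishnan2015stochastic}. The $\mathcal{D}_{+}$ case is parallel.

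Finally, the lifting to random sample sizes is routine: from
\[
F_{X_{N_{1}:N_{1}}}(x)=\sum_{m\geq 1}P(X_{m:m}<x)P(N_{1}=m),
\]
$N_{1}\geq_{st}N_{2}$, and the behaviour of $m\mapsto P(X_{m:m}<x)$ dictated by the Archimedean structure ($\phi_{j}\geq 0$ and $\psi_{j}$ decreasing), the same argument as in the closing block of the proof of Theorem \ref{th1} --- now with the inequalities in the reversed direction --- yields $F_{X_{N_{1}:N_{1}}}(x)\leq F_{Y_{N_{2}:N_{2}}}(x)$, i.e.\ $X_{N_{1}:N_{1}}\leq_{st}Y_{N_{2}:N_{2}}$. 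The main technical obstacle will be the Schur-convexity step: since the $\gamma_{i}$-derivative produces the auxiliary quantity $\xi^{*}$ in place of the $\chi^{*}$ of Theorem \ref{th1}, its monotonicity in $\alpha_{i}$ and $\gamma_{i}$ has to be re-established from scratch to close the Schur--Ostrowski inequality on both $\mathcal{E}_{+}$ and $\mathcal{D}_{+}$.
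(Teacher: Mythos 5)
The paper never actually writes out a proof of Theorem \ref{th2} (it is stated right after Theorem \ref{th1} and left implicit), so the only benchmark is the intended mirroring of Theorem \ref{th1}, and your skeleton (fixed-$n$ chain from super-additivity and $G\leq H$, a Schur step via Theorem A.8 of \cite{Marshall2011}, then mixing over the random sample size) is indeed that mirroring. The first genuine gap is that this mirroring is not symbol-symmetric, and your key step is false as stated: the two shape parameters of the Kw-G model act on the distribution function in \emph{opposite} monotone directions. Writing $s_i=1-(1-H^{\alpha_i}(x))^{\gamma_i}$, one has $\partial s_i/\partial\gamma_i=-(1-H^{\alpha_i}(x))^{\gamma_i}\ln(1-H^{\alpha_i}(x))\geq 0$; your $\xi^*$ equals $+\partial s_i/\partial\gamma_i$, not $-\partial s_i/\partial\gamma_i$. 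Hence $s_i$, and with it $\mathcal{L}(\boldsymbol{\alpha},\boldsymbol{\gamma},\psi_2;H)=\psi_2\bigl(\sum_{i}\phi_2\{s_i\}\bigr)$, is \emph{increasing} in each $\gamma_i$ (unlike the $\alpha$-case of Theorem \ref{th1}, where it is decreasing), so ``decreasing and Schur-convex in $\boldsymbol{\gamma}$'' can never be established, and Theorem A.8 in the form you invoke (weak supermajorization requires decreasing together with Schur-convex) is inapplicable. The paper itself signals this asymmetry: in its only worked $\gamma$-majorization argument (proof of Theorem \ref{th3}) the relevant functional is the survival function of the \emph{minimum}, which is decreasing in $\gamma_i$, and what is proved there is Schur-\emph{concavity}, not Schur-convexity.

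The second gap is a direction error that would make the proposal prove the wrong ordering even if the Schur step were granted. Your chain ends with $F_{X_{N_1:N_1}}(x)\leq F_{Y_{N_2:N_2}}(x)$, which you read as $X_{N_1:N_1}\leq_{st}Y_{N_2:N_2}$; but an inequality between \emph{distribution functions} in that direction means $X_{N_1:N_1}\geq_{st}Y_{N_2:N_2}$. The theorem asserts $\leq_{st}$, i.e.\ $F_{X_{N_1:N_1}}\geq F_{Y_{N_2:N_2}}$, and for that target every link you use points the wrong way: super-additivity of $\phi_2\circ\psi_1$ and $G\leq H$ give the upper bounds $\mathcal{L}(\boldsymbol{\alpha},\boldsymbol{\gamma},\psi_1;G)\leq\mathcal{L}(\boldsymbol{\alpha},\boldsymbol{\gamma},\psi_2;G)\leq\mathcal{L}(\boldsymbol{\alpha},\boldsymbol{\gamma},\psi_2;H)$, while the correct $\gamma$-step (increasing plus Schur-concave, via the dual form of Theorem A.8) yields $\mathcal{L}(\boldsymbol{\alpha},\boldsymbol{\gamma},\psi_2;H)\geq\mathcal{L}(\boldsymbol{\alpha},\boldsymbol{\delta},\psi_2;H)$; the two inequalities face each other and the chain closes in neither direction. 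So Theorem \ref{th2} cannot be obtained by transliterating Theorem \ref{th1} with ``reversed inequalities'': a genuine proof has to confront the fact that the hypotheses of Theorem \ref{th2} are oriented exactly as in Theorem \ref{th1} (they push $F_{X_{n:n}}$ \emph{below} $F_{Y_{n:n}}$, i.e.\ toward $\geq_{st}$) and reconcile that with the asserted $\leq_{st}$ --- a tension your proposal never engages.
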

In the above two theorems, we have presented some conditions for comparing two random maxima. The following theorem provides a comparison between two random minima when $\boldsymbol{\gamma}\succeq_{w}\boldsymbol{\delta}.$

\begin{theorem}\label{th3}
	Let Assumption \ref{ass1} hold with $\boldsymbol{\alpha}=\boldsymbol{\beta},$ $\boldsymbol{\alpha},~\boldsymbol{\gamma},~\boldsymbol{\delta}\in\mathcal{E}_+(\mathcal{D}_+)$ and $G\geq H.$ Also, suppose $\phi_{2}\circ\psi_{1}$ is super-additive and $\psi_1$ or $\psi_2$ is log-convex. Then, $\boldsymbol{\gamma}\succeq_{w}\boldsymbol{\delta}\Rightarrow X_{1:{N_{1}}}\leq_{st} Y_{1:{N_{2}}},$ whenever $N_{1}\leq_{st}N_{2}$.
\end{theorem}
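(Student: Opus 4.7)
The plan is to follow the blueprint of Theorem~\ref{th1}, with minima replacing maxima, survival copulas replacing distribution copulas, and log-convexity of $\psi$ taking the role that log-concavity played there. Set
$$\mathcal{N}(\boldsymbol{\alpha},\boldsymbol{\gamma},\psi;K)=\psi\!\left(\sum_{i=1}^{n}\phi\!\left((1-K^{\alpha_i}(x))^{\gamma_i}\right)\right),$$
so that $P(X_{1:n}>x)=\mathcal{N}(\boldsymbol{\alpha},\boldsymbol{\gamma},\psi_1;G)$ and, using $\boldsymbol{\alpha}=\boldsymbol{\beta}$, $P(Y_{1:n}>x)=\mathcal{N}(\boldsymbol{\alpha},\boldsymbol{\delta},\psi_2;H)$. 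My central claim will be the pointwise-in-$n$ inequality $\boldsymbol{\gamma}\succeq_{w}\boldsymbol{\delta}\Rightarrow\mathcal{N}(\boldsymbol{\alpha},\boldsymbol{\gamma},\psi_1;G)\leq\mathcal{N}(\boldsymbol{\alpha},\boldsymbol{\delta},\psi_2;H)$, which I would establish by chaining through two intermediates,
$$\mathcal{N}(\boldsymbol{\alpha},\boldsymbol{\gamma},\psi_1;G)\leq\mathcal{N}(\boldsymbol{\alpha},\boldsymbol{\gamma},\psi_2;G)\leq\mathcal{N}(\boldsymbol{\alpha},\boldsymbol{\gamma},\psi_2;H)\leq\mathcal{N}(\boldsymbol{\alpha},\boldsymbol{\delta},\psi_2;H).$$
The first link is the usual consequence of super-additivity of $\phi_2\circ\psi_1$ composed with the decreasing $\psi_2$; the second is immediate from $G\geq H\Rightarrow (1-G^{\alpha_i}(x))^{\gamma_i}\leq(1-H^{\alpha_i}(x))^{\gamma_i}$ propagating through the decreasing $\phi_2$ and $\psi_2$.

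The third, majorization link is the technically hardest one. By Theorem~A.8 of \cite{Marshall2011}, it suffices to show that $\mathcal{N}(\boldsymbol{\alpha},\boldsymbol{\gamma},\psi_2;H)$ is \emph{decreasing} and \emph{Schur-concave} in $\boldsymbol{\gamma}\in\mathcal{E}_+(\mathcal{D}_+)$. Decreasingness is immediate from the monotonicities already used: $(1-H^{\alpha_i}(x))^{\gamma_i}$ is decreasing in $\gamma_i$, $\phi_2$ reverses this, and $\psi_2$ reverses it once more. For Schur-concavity I would differentiate, as in~(\ref{eq-sconcave-max-alpha_1}), and reduce the question, for $1\leq i<j\leq n$, to the sign of
$$\frac{\partial\mathcal{N}}{\partial\gamma_i}-\frac{\partial\mathcal{N}}{\partial\gamma_j}\overset{sign}{=}\psi_2'\!\left(\sum_{k}\phi_2(\tilde s_k)\right)\bigl[\Lambda(\alpha_i,\gamma_i)-\Lambda(\alpha_j,\gamma_j)\bigr],$$
where $\tilde s_k=(1-H^{\alpha_k}(x))^{\gamma_k}$ and $\Lambda(\alpha,\gamma)=(1-H^{\alpha}(x))^{\gamma}\ln(1-H^{\alpha}(x))/\psi_2'\!\left(\phi_2((1-H^{\alpha}(x))^{\gamma})\right)$. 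Since $\psi_2'<0$, Schur-concavity reduces to $\Lambda(\alpha_i,\gamma_i)\leq\Lambda(\alpha_j,\gamma_j)$ whenever $\alpha_i\leq\alpha_j$ and $\gamma_i\leq\gamma_j$.

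The main obstacle, and the reason the hypothesis flips from log-concavity (Theorems~\ref{th1}--\ref{th2}) to log-convexity here, is precisely this sign check for $\Lambda$. The three factors of $\Lambda$ have competing monotonicities in $\alpha$ and $\gamma$, and what disentangles them is the behaviour of $\psi_2/\psi_2'$: log-convexity of $\psi_2$ forces $\psi_2'/\psi_2$ to be increasing, hence $\psi_2/|\psi_2'|$ is increasing, which is exactly the direction that makes $\Lambda$ increasing along the ordered cone. The max-setting needed the opposite because there the arguments of $\phi_i$ were $1-(1-G^{\alpha})^\gamma$ rather than $(1-G^{\alpha})^\gamma$. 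I would invoke the $\alpha$- and $\gamma$-monotonicities of the Kw-G weight functions from Lemma~3.1 of \cite{balakrishnan2015stochastic} and Lemma~3.3 of \cite{kundu2016some} to close the Schur argument, and then finish by copying the closing chain of Theorem~\ref{th1}:
\begin{align*}
\bar{F}_{X_{1:N_1}}(x)&=\sum_{m=1}^{n}P(X_{1:m}>x)\,P(N_1=m)\\
&\leq\sum_{m=1}^{n}P(Y_{1:m}>x)\,P(N_1=m)\\
&\leq\sum_{m=1}^{n}P(Y_{1:m}>x)\,P(N_2=m)=\bar{F}_{Y_{1:N_2}}(x),
\end{align*}
where $N_1\leq_{st}N_2$ is used to upgrade the pointwise-in-$m$ inequality to the random-$N$ statement.
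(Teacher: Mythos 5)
Your proposal is correct and follows essentially the same route as the paper: the same chain $\mathcal{N}(\boldsymbol{\alpha},\boldsymbol{\gamma},\psi_1;G)\leq\mathcal{N}(\boldsymbol{\alpha},\boldsymbol{\gamma},\psi_2;G)\leq\mathcal{N}(\boldsymbol{\alpha},\boldsymbol{\gamma},\psi_2;H)$ via super-additivity of $\phi_2\circ\psi_1$ and $G\geq H$, the same reduction of the majorization link to showing the survival function of the minimum is decreasing and Schur-concave in $\boldsymbol{\gamma}$ via Theorem A.8 of \cite{Marshall2011}, the same derivative computation and appeal to Lemma 3.3 of \cite{kundu2016some}, and the same final summation over $N_1\leq_{st}N_2$. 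Your write-up is in fact somewhat more explicit than the paper's about where the log-convexity of the generator enters the sign analysis of $\Lambda$.
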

\begin{proof}
Let us denote $\mathcal{L}_{1}(\boldsymbol{\alpha}, \boldsymbol{\gamma},\psi_{1};G)=\psi_{1}\left(\sum\limits_{i=1}^{n}\phi_{1}\{\left(1-G^{\alpha_{i}}(x)\right)^{\gamma_{i}}\}\right)$
	and
	$\mathcal{L}_{1}(\boldsymbol{\alpha}, \boldsymbol{\delta},\psi_{2};H)=\psi_{2}\left(\sum\limits_{i=1}^{n}\phi_{2}\{\left(1-H^{\alpha_{i}}(x)\right)^{\delta_{i}}\}\right).$
	Using the given conditions, we then have  
	$$\mathcal{L}_{1}(\boldsymbol{\alpha}, \boldsymbol{\gamma},\psi_{1};G)\leq\mathcal{L}_{1}(\boldsymbol{\alpha}, \boldsymbol{\gamma},\psi_{2};G)\leq\mathcal{L}_{1}(\boldsymbol{\alpha}, \boldsymbol{\gamma},\psi_{2};H).$$
	According to Theorem $A.8$ of \cite{Marshall2011}, to establish the required result, it is enough to show that $\mathcal{L}_1(\boldsymbol{\alpha}, \boldsymbol{\gamma},\psi_{2};H)$ is decreasing and Schur-concave in $\boldsymbol{\gamma}\in \mathcal{E}_{+}(\mathcal{D}_{+}).$ Consider the case when $\boldsymbol{\gamma}\in\mathcal{E}_{+}.$ The other case can be established similarly.
	The partial derivative of $\mathcal{L}_1(\boldsymbol{\alpha}, \boldsymbol{\gamma},\psi_{2};H)$ with respect to $\gamma_i$ is given by,
	for $1\leq i\leq j\leq n,$
	\begin{equation}\label{eq-sconcave-max-alpha}
		\frac{\partial \mathcal{L}_1(\boldsymbol{\alpha}, \boldsymbol{\gamma},\psi_{2};H)}{\partial\gamma_{i}}-\frac{\partial \mathcal{L}_1(\boldsymbol{\alpha}, \boldsymbol{\gamma},\psi_{2};H)}{\partial\gamma_{j}}=\mathcal{L}'_1(\boldsymbol{\alpha}, \boldsymbol{\gamma},\psi_{2};H)\left[\frac{s_{1i}\chi^*_{1}(\alpha_i,\gamma_i)}{\psi'_2(\phi_2\{s_{1i}\})}-\frac{s_{1j}\chi^*_{1}(\alpha_j,\gamma_j)}{\psi'_2(\phi_2\{s_{1j}\})}\right],
	\end{equation}
	where $s_{1i}=\left(1-G^{\alpha_{i}}(x)\right)^{\gamma_{i}}$ and $\chi^*_{1}(\alpha_{i})={\ln(1-G^{\alpha_{i}}(x))}.$ Note that $s_{1i}$ is increasing in $\alpha_i$, decreasing in $\gamma_{i}$, which implies $\chi^*_{1}(\alpha_i)$ is increasing in $\alpha_{i},$ for $i=1,\ldots,n.$ Therefore, by the given conditions and Lemma $3.3$ of \cite{kundu2016some}, we have \eqref{eq-sconcave-max-alpha} to be non-negative and $\mathcal{L}_1(\boldsymbol{\alpha}, \boldsymbol{\gamma},\psi_{2};H)$ is decreasing in $\boldsymbol{\gamma}\in \mathcal{E}_{+}.$ The remaining part of the proof can be completed by the same lines as in the proof of Theorem \ref{th1}. 
\end{proof}
\begin{remark}
In Theorem \ref{th3}, if we consider $\bm{\alpha}=\bm{\beta}=\bm{1}_{n},$ $N_{1}$ and $N_2$ are not random variables and remove the condition $G\geq H,$ then this theorem is an extension of Part $(1)$ of Theorem $4.1$ of \cite{fang2016stochastic}.
\end{remark}
Next, we present another counterexample to show that if we replace the condition ``$G\leq H$'' in Theorem \ref{th3} by ``$G\geq H$'', then the above theorem may not be true.
\begin{counterexample}\label{cex1.3}
Let, $X_i \sim \text{Kw-G}(x,\alpha_i,\gamma_i;G)$ and $Y_i \sim \text{Kw-G}(x,\alpha_{i},\delta_i;H),$ for $i=1,2,3,4,5.$ Further, let $(\gamma_1,\gamma_2 ,\gamma_3,\gamma_4,\gamma_5)=(3.9,3.1,2.9,2.8,2.1)$, $(\delta_{1},\delta_{2},\delta_{3},\delta_{4},\delta_{5})=(3.2,2.6,1.9,1.2,1)$ and $(\alpha_1,\alpha_2,\alpha_3,\alpha_4,\alpha_5)=(\beta_1, \beta_2,\beta_3,\beta_4,\beta_5)=(0.01,7,9,9.1,9.12)$. Clearly, $(\gamma_1,\gamma_2, \gamma_3,\gamma_4,\gamma_5)\succeq_{w}(\delta_{1},\delta_{2},\delta_{3},\delta_4,\delta_5)$. Let $G(x)=e^{-\frac{1}{x}}$ and $H(x)=1-e^{-x},$ for $x>0.$ Also, let $(X_{1},X_{2},X_{3})$ be selected with probability $P(N_{1}=3)=p(3)=1/5,$ $(X_{1},X_{2},X_{3},X_{4})$ be selected with probability $P(N_{1}=4)=p(4)=2/5$ and $(X_{1},X_{2},X_{3},X_{4},X_{5})$ be selected with probability $P(N_{1}=5)=p(5)=2/5;$ further let $(Y_{1},Y_{2},Y_{3})$ be selected with probability $P(N_{2}=3)=p(3)=1/5,$ $(Y_{1},Y_{2},Y_{3}, Y_{4})$ be selected with probability $P(N_{2}=4)=p(4)=3/5$ and $(Y_{1},Y_{2},Y_{3}, Y_{4},Y_{5})$ be selected with probability $P(N_{2}=5)=p(5)=1/5.$ Here, $N_{1}\leq_{st}N_{2}.$ Consider the Gumbel copula { with} generators $\psi_1(x) = e^{-x^\frac{1}{a}}$ and $\psi_2(x) = e^{-x^\frac{1}{b}},~x>0,$ with $a=1.4$ and $b=2.5$. Here, all the conditions of Theorem \ref{th3} are satisfied except $G\geq H.$ Figure $1(b)$ displays the difference between  $\bar{F}_{X_{1:{N_{1}}}}(x)$ and $\bar{F}_{Y_{1:{N_{2}}}}(x)$ crosses $x$ axis for some $x\geq 0,$ which violates the statement of Theorem \ref{th3}. 
\end{counterexample}

In Theorem \ref{th1}, we have established the conditions for which ``$\boldsymbol{\alpha}\succeq^{w}\boldsymbol{\beta}\Rightarrow X_{{N_1}:{N_1}}\geq_{st} Y_{{N_2}:{N_2}}$'' holds. Now, a natural question that arises is whether the result is true for random minima. The following theorem provides an affirmative answer to this question.
\begin{theorem}\label{th4}
 Let Assumption \ref{ass1} hold with $\boldsymbol{\gamma}=\boldsymbol{\delta},~\boldsymbol{\alpha},~\boldsymbol{\beta},~\boldsymbol{\delta}\in\mathcal{E}_+(\mathcal{D}_+)$ and $G\geq H.$ Also, suppose $\phi_{2}\circ\psi_{1}$ is super-additive and $\psi_1$ or $\psi_2$ is log-convex. Then, $\boldsymbol{\alpha}\succeq^{w}\boldsymbol{\beta}\Rightarrow X_{1:{N_{1}}}\leq_{st} Y_{1:{N_{2}}},$ whenever $N_{1}\leq_{st}N_{2}$.
\end{theorem}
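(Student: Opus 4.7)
The plan is to mirror the architecture of the proof of Theorem \ref{th3}, but with $\boldsymbol{\alpha}$ (rather than $\boldsymbol{\gamma}$) playing the role of the ``varying'' parameter. Set
$$\mathcal{L}_{1}(\boldsymbol{\alpha},\boldsymbol{\gamma},\psi;K)=\psi\!\left(\sum_{i=1}^{n}\phi\!\left\{(1-K^{\alpha_{i}}(x))^{\gamma_{i}}\right\}\right),$$
so that $P(X_{1:n}>x)=\mathcal{L}_{1}(\boldsymbol{\alpha},\boldsymbol{\gamma},\psi_{1};G)$ and (using $\boldsymbol{\gamma}=\boldsymbol{\delta}$) $P(Y_{1:n}>x)=\mathcal{L}_{1}(\boldsymbol{\beta},\boldsymbol{\gamma},\psi_{2};H)$. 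First I would peel off the copula and baseline distribution in two steps, exactly as in the proof of Theorem \ref{th3}: super-additivity of $\phi_{2}\circ\psi_{1}$ together with log-convexity of $\psi_{1}$ or $\psi_{2}$ gives $\mathcal{L}_{1}(\boldsymbol{\alpha},\boldsymbol{\gamma},\psi_{1};G)\le\mathcal{L}_{1}(\boldsymbol{\alpha},\boldsymbol{\gamma},\psi_{2};G)$, while $G\ge H$ combined with the monotonicity of $\psi_{2}$ and $\phi_{2}$ yields $\mathcal{L}_{1}(\boldsymbol{\alpha},\boldsymbol{\gamma},\psi_{2};G)\le\mathcal{L}_{1}(\boldsymbol{\alpha},\boldsymbol{\gamma},\psi_{2};H)$. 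The problem then reduces to proving
$$\boldsymbol{\alpha}\succeq^{w}\boldsymbol{\beta}\;\Longrightarrow\;\mathcal{L}_{1}(\boldsymbol{\alpha},\boldsymbol{\gamma},\psi_{2};H)\le\mathcal{L}_{1}(\boldsymbol{\beta},\boldsymbol{\gamma},\psi_{2};H),$$
which, by Theorem A.8 of \cite{Marshall2011}, is equivalent to showing that $\mathcal{L}_{1}(\boldsymbol{\alpha},\boldsymbol{\gamma},\psi_{2};H)$ is increasing and Schur-concave in $\boldsymbol{\alpha}$ on $\mathcal{E}_{+}(\mathcal{D}_{+})$.

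The heart of the argument is this Schur-concavity verification. Treating the $\mathcal{E}_{+}$ case (the $\mathcal{D}_{+}$ case being analogous), set $s_{1i}=(1-H^{\alpha_{i}}(x))^{\gamma_{i}}$. A direct partial-derivative calculation yields, for $1\le i\le j\le n$,
$$\frac{\partial\mathcal{L}_{1}}{\partial\alpha_{i}}-\frac{\partial\mathcal{L}_{1}}{\partial\alpha_{j}}\overset{sign}{=}\mathcal{L}'_{1}\!\left[\frac{s_{1i}\,\tilde{\chi}(\alpha_{i},\gamma_{i})}{\psi'_{2}(\phi_{2}\{s_{1i}\})}-\frac{s_{1j}\,\tilde{\chi}(\alpha_{j},\gamma_{j})}{\psi'_{2}(\phi_{2}\{s_{1j}\})}\right],$$
where $\tilde{\chi}(\alpha,\gamma)=-\gamma H^{\alpha}(x)\ln H(x)/(1-H^{\alpha}(x))$ is, up to sign, the logarithmic derivative of $s_{1i}$ in $\alpha_{i}$. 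The quantity $s_{1i}$ is increasing in $\alpha_{i}$ and decreasing in $\gamma_{i}$, and a short monotonicity check in the spirit of Lemma 3.1 of \cite{balakrishnan2015stochastic} shows that $\tilde{\chi}$ is decreasing in $\alpha$ and increasing in $\gamma$. Combined with the log-convexity of $\psi_{1}$ or $\psi_{2}$, which controls the monotonicity of the map $s\mapsto s/\psi'_{2}(\phi_{2}\{s\})$, these facts deliver the required sign in the bracket for $1\le i\le j\le n$ on $\mathcal{E}_{+}$, producing Schur-concavity. Monotonicity of $\mathcal{L}_{1}$ in each $\alpha_{i}$ is then immediate: larger $\alpha_{i}$ enlarges $s_{1i}$, shrinks $\phi_{2}(s_{1i})$, and hence the decreasing $\psi_{2}$ of a smaller sum returns a larger value (alternatively this can be quoted from Lemma 3.3 of \cite{kundu2016some}).

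Once the pointwise inequality $P(X_{1:m}>x)\le P(Y_{1:m}>x)$ is in hand, I would close exactly as in the final display of the proof of Theorem \ref{th1}: using $N_{1}\le_{st}N_{2}$,
$$\bar{F}_{X_{1:N_{1}}}(x)=\sum_{m=1}^{n}P(X_{1:m}>x)\,P(N_{1}=m)\le\sum_{m=1}^{n}P(X_{1:m}>x)\,P(N_{2}=m)\le\sum_{m=1}^{n}P(Y_{1:m}>x)\,P(N_{2}=m)=\bar{F}_{Y_{1:N_{2}}}(x),$$
which gives $X_{1:N_{1}}\le_{st}Y_{1:N_{2}}$. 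The main obstacle I anticipate is the Schur-concavity step. Unlike Theorems \ref{th1} and \ref{th2}, where log-\emph{concavity} was the operative hypothesis, the minima (survival-copula) setting forces log-\emph{convexity} as the right structural assumption, and the delicate book-keeping lies precisely in tracking how $s_{1i}$ and $\tilde{\chi}$ pull in opposite directions as $\alpha_{i}$ grows, a tension that is reconciled only through the log-convexity of $\psi_{2}$.
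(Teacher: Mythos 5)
Your proposal is, in substance, the paper's own proof: the paper disposes of Theorem \ref{th4} with the single line that it is ``quite similar to that of Theorem \ref{th3}'', and what you have written is exactly that proof transplanted from $\boldsymbol{\gamma}$ to $\boldsymbol{\alpha}$ --- the same two-step peeling of the copula and the baseline, the same reduction via Theorem A.8 of Marshall--Olkin--Arnold to monotonicity plus a Schur property of $\mathcal{L}_1$ on $\mathcal{E}_{+}(\mathcal{D}_{+})$, the same partial-derivative bookkeeping with $s_{1i}$ and its logarithmic derivative, and the same closing display lifted from the proof of Theorem \ref{th1}. Your formula for $\tilde{\chi}$ and its monotonicity properties check out, and identifying ``increasing and Schur-concave'' as the right target under a $\succeq^{w}$ hypothesis is the correct dualization of the ``decreasing and Schur-concave'' used for $\succeq_{w}$ in Theorem \ref{th3}.

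One step, however, does not survive scrutiny, and it is a defect you have inherited from the paper rather than introduced: the first inequality of your closing display. The function $g(m)=P(X_{1:m}>x)=\psi_1\left(\sum_{i=1}^{m}\phi_1\{(1-G^{\alpha_i}(x))^{\gamma_i}\}\right)$ is \emph{decreasing} in $m$ (adding a component can only shrink a minimum), and for a decreasing $g$ the hypothesis $N_1\leq_{st}N_2$ yields $E[g(N_1)]\geq E[g(N_2)]$, i.e.\ $\sum_m P(X_{1:m}>x)P(N_1=m)\geq\sum_m P(X_{1:m}>x)P(N_2=m)$ --- the reverse of what you (and the paper, in the final display of the proof of Theorem \ref{th1}) assert. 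The direction cannot be rescued by regrouping the chain: take $\boldsymbol{\alpha}=\boldsymbol{\beta}$, $\boldsymbol{\gamma}=\boldsymbol{\delta}$, $G=H$, $\psi_1=\psi_2=e^{-x}$ (all hypotheses of the theorem hold trivially) and $N_1\equiv 1$, $N_2\equiv 2$; the claimed conclusion then reads $X_1\leq_{st}\min(Y_1,Y_2)$ with $Y_i$ distributed as $X_i$, which is false. The random-sample-size step goes through only under $N_1\geq_{st}N_2$; with the hypotheses as printed, neither your argument nor the paper's closes this gap.
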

\begin{proof}
The proof is quite similar to that of Theorem \ref{th3}, and is therefore omitted for the sake of brevity.
\end{proof}
{The following counterexample shows that the log-convexity condition is required for Theorem \ref{th4}.
\begin{counterexample}\label{cex1.2.}
Suppose $X_i \sim \text{Kw-G}(x,\alpha_i,\gamma_i;G)$ and $Y_i \sim \text{Kw-G}(x,\alpha_{i},\delta_i;H),$ for $i=1,2,3,4,5$. Further, let $(\alpha_1,\alpha_2 ,\alpha_3,\alpha_4,\alpha_5)=(7.1,2.9,1.56,0.03,0.201)$, $(\beta_{1},\beta_{2},\beta_{3},\beta_{4},\beta_{5})=(8.1,3.009,$
$2.6,1.06,0.01)$ and $(\gamma_1, \gamma_2,\gamma_3,\gamma_4,\gamma_5)=(\delta_1, \delta_2, \delta_3,\delta_4,\delta_5)=(5.02,2.05,1.09,0.01,0.001)$. It is easy to see that $(\alpha_1,\alpha_2,\alpha_3,\alpha_4,\alpha_5)$
$\succeq^{w}(\beta_{1},\beta_{2},\beta_{3},\beta_4,\beta_5)$. Let $H(x)=e^{-\frac{1}{x}}$ and $G(x)=1-e^{-x},$ for $x>0.$ Also, let $(X_{1},X_{2},X_{3})$ be selected with probability $P(N_{1}=3)=p(3)=1/5,$ $(X_{1},X_{2},X_{3},X_{4})$ be selected with probability $P(N_{1}=4)=p(4)=2/5$ and $(X_{1},X_{2},X_{3},X_{4},X_{5})$ be selected with probability $P(N_{1}=5)=p(5)=2/5;$ further, let $(Y_{1},Y_{2},Y_{3})$ be selected with probability $P(N_{2}=3)=p(3)=1/5,$ $(Y_{1},Y_{2},Y_{3}, Y_{4})$ be selected with probability $P(N_{2}=4)=p(4)=3/5$ and $(Y_{1},Y_{2},Y_{3}, Y_{4},Y_{5})$ be selected with probability $P(N_{2}=5)=p(5)=1/5,$ which satisfy the condition $N_{1}\leq_{st}N_{2}.$ Choose the Gumbel-Hougaard copula { with} generators $\psi_1 (x)= e^{1-(1+x)^\frac{1}{a}}$ and $\psi_2(x) = e^{1-(1+x)^\frac{1}{b}},~x>0,$ with $a=4.0002$ and $b=4.0001$. So, all the conditions of Theorem \ref{th4} are satisfied except the log-convexity of $\psi_1$ and $\psi_2$. Figure \ref{c3..} displays the difference between $\bar{F}_{X_{1:{N_1}}}(x)$ and $\bar{F}_{Y_{1:{N_2}}}(x)$ crosses $x$ axis for some $x\geq 0.$ Therefore, Theorem \ref{th4} can not be possible without the log-convexity property of the generators.
	\begin{figure}[h!]
		\begin{center}
			\includegraphics[height=2.0in]{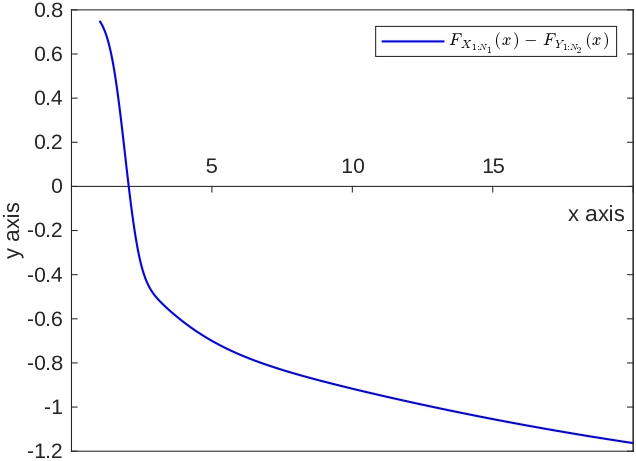}
			\caption{Plot of $\bar{F}_{X_{1:{N_{1}}}}(x)-\bar{F}_{Y_{1:{N_2}}}(x)$ as in Counterexample \ref{cex1.2.} when $\psi_1$ and $\psi_2$ are both log-concave.  }
			\label{c3..}
		\end{center}
	\end{figure}
  \end{counterexample}}
  \begin{remark}
      It is also of interest to show that if we consider $\boldsymbol{\gamma}=\boldsymbol{\delta}(= \boldsymbol{1}_{n})$ in Theorem \ref{th4}, then under the condition $\frac{1-\psi_1}{\psi^{`}_{1}}$ or $\frac{1-\psi_2}{\psi^{`}_{2}}$ (instead of $\psi_1$ or $\psi_2$) being log-convex, the result can also be true.
  \end{remark}
{\begin{remark}
	If we take Independence copula in place of Archimedean survival copula, then Theorem \ref{th4} and Theorem \ref{th3} are extensions of Theorem $4.1$ and Theorem $4.2(i)$ of \cite{kundu2018w}, respectively, when the number of observations are not random and same.
\end{remark}}
 It is important to note that the condition $\phi_2\circ\psi_1$ is supper-additive is nothing but to say that they are related with positively ordered copula families when generators $\psi_1$ and $\psi_2$ belong to same family of copulas. For example, we can consider Gumbel-Hougaard, Clayton, Ali-Mikhail-Haq (AMH) and Frank copulas (see \cite{nelsen2006introduction}). Moreover, the log-convexity of a generator of an Archimedean copula leads to the left tail decreasing in sequence property (see \cite{you2014optimal}). There are Archimedean copulas with generators satisfying the log-convexity  and log-concavity properties. For example, Clayton copula, independence copula, AMH copula (for non-negative valued parameters) and Gumbel copula all have log-convex generators, while AMH copula with negative valued parameters has log-concave generator. {For various subfamilies of Archimedean survival copulas, an interpretation of the superadditive
property of $\phi_2\circ\psi_1$ is that Kendall’s $\tau$ of the $2$-dimensional copula with generator $\psi_2$ is larger
than that of the copula with generator $\psi_1$ and consequently, is more positive dependent. Similar observation can be found for other measures like Blomqvist’s $\beta$, Spearman’s rho and Gini’s $\gamma$ (see \cite{nelsen2006introduction}).}

Next, we develop a necessary and sufficient condition for comparing random minima under same (random) sample size in the sense of the hazard rate order when the observations are independent and heterogeneous.  {It is important to note that in the remaining part of this section, to establish the ordering results, we will consider $N_{1}\overset{st}{=}N_{2}\overset{st}{=}N,$ and $\psi_{1}(x)=\psi_{2}(x)=\psi(x)=e^{-x},~x>0$. The following theorems will be used to prove some results in this section.
\begin{theorem}(Theorem $3.1$ of \cite{chowdhury2024})\label{ch24_3.1}
    Suppose $X_{n:n}\sim \bar{F}_{n:n}(x)$ and $X_{n:n}\sim \bar{G}_{n:n}(x).$ Also, let the support of an
integer-valued random variable $N$ having pmf $p(n)$ be $N_{+}$. Now, for all $x\geq l$, if $\frac{\bar{F}_{1:n}(x)}{\bar{G}_{1:n}(x)}$ is increasing
in $n\in N_{+}$, then $X_{1:n}\leq_{hr} Y_{1:n}$ implies $X_{1:N}\leq_{hr} Y_{1:N}$.
\end{theorem}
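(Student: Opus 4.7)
The plan is to verify $X_{1:N}\leq_{hr}Y_{1:N}$ by showing that $\bar F_{1:N}(x)/\bar G_{1:N}(x)$ is decreasing in $x$, equivalently that
\[
\Delta(x):=\bar F_{1:N}(x)\bar G'_{1:N}(x)-\bar G_{1:N}(x)\bar F'_{1:N}(x)\geq 0 \quad \text{for every } x\geq l.
\]
First, I would substitute the mixture representations $\bar F_{1:N}(x)=\sum_{n\in N_{+}}p(n)\bar F_{1:n}(x)$ and $\bar G_{1:N}(x)=\sum_{n\in N_{+}}p(n)\bar G_{1:n}(x)$ into $\Delta(x)$, turning it into a double sum over $N_{+}\times N_{+}$ with summand $\bar F_{1:n}\bar G'_{1:m}-\bar G_{1:n}\bar F'_{1:m}$.

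Next, I would symmetrize in $(n,m)\leftrightarrow(m,n)$ and, using the identities $\bar F'_{1:n}=-\bar F_{1:n}r_{F_{1:n}}$ and $\bar G'_{1:n}=-\bar G_{1:n}r_{G_{1:n}}$, collapse each paired summand to
\[
\bar G_{1:n}\bar F_{1:m}\bigl(r_{F_{1:m}}-r_{G_{1:n}}\bigr)+\bar G_{1:m}\bar F_{1:n}\bigl(r_{F_{1:n}}-r_{G_{1:m}}\bigr).
\]
Adding and subtracting $r_{G_{1:m}}$ inside the first parenthesis and $r_{G_{1:n}}$ inside the second splits this cleanly into a \emph{fibrewise} contribution
\[
\bar G_{1:n}\bar F_{1:m}\bigl(r_{F_{1:m}}-r_{G_{1:m}}\bigr)+\bar G_{1:m}\bar F_{1:n}\bigl(r_{F_{1:n}}-r_{G_{1:n}}\bigr),
\]
which is automatically non-negative since the stated hr order $X_{1:n}\leq_{hr}Y_{1:n}$ is equivalent to $r_{F_{1:n}}\geq r_{G_{1:n}}$ for every $n$, plus a \emph{cross} contribution
\[
(r_{G_{1:m}}-r_{G_{1:n}})\bigl(\bar G_{1:n}\bar F_{1:m}-\bar G_{1:m}\bar F_{1:n}\bigr).
\]

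The main obstacle, and the step where both assumptions of the theorem enter in tandem, is signing the cross term. Setting $h(n,x)=\bar F_{1:n}(x)/\bar G_{1:n}(x)$, one factorises
\[
\bar G_{1:n}\bar F_{1:m}-\bar G_{1:m}\bar F_{1:n}=\bar G_{1:n}\bar G_{1:m}\bigl(h(m,x)-h(n,x)\bigr),
\]
so the hypothesis that $h(n,x)$ is increasing in $n$ forces this factor to agree in sign with $m-n$. Matching it requires $n\mapsto r_{G_{1:n}}(x)$ to be non-decreasing, which is the natural setting implicit in the cited theorem and which holds, for instance, whenever the $Y_{i}$'s are independent (since then $r_{G_{1:n}}(x)=\sum_{i=1}^{n}r_{Y_{i}}(x)$). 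Under this monotonicity, $r_{G_{1:m}}-r_{G_{1:n}}$ also agrees in sign with $m-n$, so the cross term is a product of two quantities of the same sign and therefore non-negative for every unordered pair $(n,m)$. Summing with the non-negative weights $p(n)p(m)$ then gives $\Delta(x)\geq 0$ on $[l,\infty)$, establishing $X_{1:N}\leq_{hr}Y_{1:N}$.
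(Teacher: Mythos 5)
This statement is imported verbatim from Theorem 3.1 of \cite{chowdhury2024}; the present paper gives no proof of it, so there is no in-house argument to compare yours against and it must be judged on its own. Your setup and algebra are correct: $X_{1:N}\leq_{hr}Y_{1:N}$ is indeed equivalent to $\Delta(x)\geq 0$, the symmetrization over pairs $(n,m)$ is valid, and the split of each paired summand into a fibrewise part and a cross part is exact. The fibrewise part is non-negative precisely because $X_{1:n}\leq_{hr}Y_{1:n}$ gives $r_{F_{1:n}}\geq r_{G_{1:n}}$ for each $n$, as you say.

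The gap is exactly where you flag it, and it is a genuine one: signing the cross term $(r_{G_{1:m}}-r_{G_{1:n}})\,\bar G_{1:n}\bar G_{1:m}\bigl(h(m,x)-h(n,x)\bigr)$ needs $n\mapsto r_{G_{1:n}}(x)$ to be non-decreasing, and this is neither among the two stated hypotheses nor derivable from them: those hypotheses control the ratio $\bar F_{1:n}/\bar G_{1:n}$ in $x$ and in $n$, but say nothing about how the hazard rate of $Y_{1:n}$ alone moves with $n$. Calling it ``implicit in the cited theorem'' is an appeal, not a derivation. Worse, your own decomposition shows the estimate genuinely fails without it: at a point $x_0$ where $r_{F_{1:n}}(x_0)=r_{G_{1:n}}(x_0)$ for the relevant $n$ (permitted by the hr order), with $h(1,x_0)<h(2,x_0)$ strictly and $r_{G_{1:2}}(x_0)<r_{G_{1:1}}(x_0)$ (possible for dependent $Y_i$'s), one gets $\Delta(x_0)<0$. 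So either the quoted statement is missing a hypothesis --- note that its companion for the reversed hazard rate order, quoted later in the same section, does carry exactly such an extra condition, namely $\tilde r_{Y_{n:n}}(x)$ increasing in $n$ --- or a different proof is required. Two mitigating points: every application of this theorem in the paper takes the observations independent (generator $\psi(x)=e^{-x}$), where $r_{G_{1:n}}=\sum_{i=1}^{n}r_{Y_i}$ is automatically increasing in $n$, so your argument does establish everything the paper actually uses; and you were candid about where the extra input enters. But as a proof of the statement as literally given, the cross-term step is unjustified.
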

\begin{theorem}(Theorem $3.2$ of \cite{chowdhury2024})\label{ch24_3.2}
    Suppose $X_{n:n}\sim \bar{F}_{n:n}(x)$ and $X_{n:n}\sim \bar{G}_{n:n}(x).$ Also, let the support of an
integer-valued random variable $N$ having pmf $p(n)$ be $N_{+}$. Now, for all $x\geq u$, if $\frac{\bar{F}_{1:n}(x)}{\bar{G}_{1:n}(x)}$ is decreasing
in $n\in N_{+}$, then $X_{1:n}\geq_{hr} Y_{1:n}$ implies $X_{1:N}\geq_{hr} Y_{1:N}$.
\end{theorem}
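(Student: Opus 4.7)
My plan is to prove $X_{1:N}\geq_{hr} Y_{1:N}$ by showing that the ratio
\[
R(x) \;=\; \frac{\bar F_{X_{1:N}}(x)}{\bar F_{Y_{1:N}}(x)} \;=\; \frac{\sum_{n\in N_+} p(n)\,\bar F_{1:n}(x)}{\sum_{n\in N_+} p(n)\,\bar G_{1:n}(x)}
\]
is non-decreasing in $x$ on the range specified by the hypothesis. My first step is to rewrite this as a weighted average,
\[
R(x) \;=\; \sum_{n} \rho_n(x)\, q_n(x), \qquad \rho_n(x) := \frac{\bar F_{1:n}(x)}{\bar G_{1:n}(x)}, \qquad q_n(x) := \frac{p(n)\,\bar G_{1:n}(x)}{\sum_m p(m)\,\bar G_{1:m}(x)},
\]
so that the two hypotheses translate into: (i) $\rho_n(x)$ is non-decreasing in $x$ for each $n$ (which is exactly $X_{1:n}\geq_{hr} Y_{1:n}$), and (ii) $\rho_n(x)$ is non-increasing in $n$ for each admissible $x$.

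Next, I would differentiate $R$. Writing $s_n(x)$ for the hazard rate of $Y_{1:n}$ and $\tilde s(x)=\sum_n s_n(x)q_n(x)$ for that of $Y_{1:N}$, a logarithmic derivative of $q_n$ produces the identity $q'_n(x) = q_n(x)[\tilde s(x)-s_n(x)]$, and a short calculation then yields
\[
R'(x) \;=\; \sum_n q_n(x)\,\rho'_n(x) \;-\; \mathrm{Cov}_{q(x)}\!\bigl(\rho_\bullet(x),\, s_\bullet(x)\bigr),
\]
where the covariance is taken under the pmf $\{q_n(x)\}_{n}$. The first summand is non-negative by (i), so the proof reduces to showing that the covariance term is non-positive.

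By (ii), $\rho_n(x)$ is non-increasing in $n$, so Chebyshev's sum (rearrangement) inequality delivers $\mathrm{Cov}_{q(x)}(\rho_\bullet,s_\bullet)\leq 0$ as soon as $s_n(x)$ is non-decreasing in $n$. This is the principal obstacle: monotonicity of the minimum's hazard rate in the sample size is not listed as an explicit hypothesis, but it is the natural behaviour of minima of a growing sample (in the i.i.d.\ case $s_n = n\,h_Y$) and is readily verified in the Archimedean-copula settings in which this result is invoked in Theorems \ref{th3}--\ref{th4}. A cleaner bypass is to observe that Theorem \ref{ch24_3.2} is the formal mirror image of Theorem \ref{ch24_3.1}: swapping the labels $X\leftrightarrow Y$ transforms ``$\bar F_{1:n}/\bar G_{1:n}$ increasing in $n$'' into the present hypothesis, and simultaneously reverses both the hazard-rate assumption and the hazard-rate conclusion, so Theorem \ref{ch24_3.2} follows from Theorem \ref{ch24_3.1} by relabeling.
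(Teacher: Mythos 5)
The paper does not prove this statement at all: it is reproduced verbatim as Theorem $3.2$ of \cite{chowdhury2024} and used purely as an imported tool, so there is no in-paper proof to compare yours against. Your closing observation is the most economical route and it is correct: interchanging the labels $X\leftrightarrow Y$ (hence $\bar F\leftrightarrow \bar G$) turns ``$\bar F_{1:n}/\bar G_{1:n}$ increasing in $n$'' into ``decreasing in $n$'' and simultaneously reverses both the hazard-rate hypothesis and the hazard-rate conclusion, so Theorem \ref{ch24_3.2} is literally Theorem \ref{ch24_3.1} relabelled (the $x\geq l$ versus $x\geq u$ discrepancy is immaterial). Within this paper, however, Theorem \ref{ch24_3.1} is equally unproved, so the relabelling only transfers the burden; it is a valid reduction, not an independent proof.

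Your direct argument is where the substance lies, and the computation is right: $q_n'(x)=q_n(x)[\tilde s(x)-s_n(x)]$ and $R'(x)=\sum_n q_n(x)\rho_n'(x)-\mathrm{Cov}_{q(x)}(\rho_\bullet,s_\bullet)$ are both correct, and the first term is nonnegative precisely because $X_{1:n}\geq_{hr}Y_{1:n}$. The gap you flag is real and not cosmetic: Chebyshev's sum inequality needs $s_n(x)$, the hazard rate of $Y_{1:n}$, to be non-decreasing in $n$, and this is not among the stated hypotheses. It cannot be waved through as ``natural behaviour of minima,'' since for dependent observations $Y_{1:n+1}\leq_{hr}Y_{1:n}$ can fail even though $Y_{1:n+1}\leq_{st}Y_{1:n}$ always holds. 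Two points put this in context. First, everywhere the present paper invokes the result it has already specialised to $\psi_1=\psi_2=e^{-x}$, i.e.\ independence, where $s_n(x)=\sum_{i=1}^n r_{Y_i}(x)$ is automatically non-decreasing in $n$; so your argument does prove the theorem in every instance in which it is actually used here. Second, the companion result quoted as Theorem \ref{ch24_3.3} explicitly carries the analogous hypothesis that $\tilde r_{Y_{n:n}}(x)$ be increasing in $n$, which strongly suggests that the monotonicity condition you need is genuinely part of the source result and may simply have been dropped in transcription. In short: your proof is complete modulo one extra, honestly identified hypothesis that holds in all of the paper's applications, but as a proof of the statement exactly as printed it is not closed.
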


\begin{theorem}\label{cor7}
	Let Assumption \ref{ass1} hold with $\boldsymbol{\alpha}=\boldsymbol{\beta}=\boldsymbol{1}_{n},$ $r_{g}\leq r_{h}.$ Then, $\sum_{i=1}^{n}\gamma_{i}\leq \sum_{i=1}^{n}\delta_{i}$ holds if and only if $ X_{1:N}\geq_{hr} Y_{1:N}.$
\end{theorem}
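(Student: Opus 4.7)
The plan is to specialize the general framework and then invoke Theorem~\ref{ch24_3.2} for the random-$N$ reduction. Under $\boldsymbol{\alpha}=\boldsymbol{\beta}=\boldsymbol{1}_n$ and $\psi_1=\psi_2=e^{-x}$ (independence copula), the survival function of each $m$-th minimum factorizes, giving
\[
\bar F_{X_{1:m}}(x)=\prod_{i=1}^m(1-G(x))^{\gamma_i}=\bar G(x)^{a_m},\qquad \bar F_{Y_{1:m}}(x)=\bar H(x)^{b_m},
\]
with $a_m=\sum_{i=1}^m\gamma_i$ and $b_m=\sum_{i=1}^m\delta_i$. Logarithmic differentiation then yields the clean identities $r_{X_{1:m}}(x)=a_m\,r_g(x)$ and $r_{Y_{1:m}}(x)=b_m\,r_h(x)$ for the hazard rates of the deterministic-sample-size minima.

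For the sufficiency direction, I would combine $a_m\le b_m$ with $r_g\le r_h$ to get $r_{X_{1:m}}(x)\le r_{Y_{1:m}}(x)$, and hence $X_{1:m}\ge_{hr}Y_{1:m}$ at each deterministic $m$ in the support of $N$. To propagate this to random $N$, I would apply Theorem~\ref{ch24_3.2}, which requires checking that $\bar F_{X_{1:m}}(x)/\bar F_{Y_{1:m}}(x)=\bar G(x)^{a_m}/\bar H(x)^{b_m}$ is decreasing in $m$. Since $r_g\le r_h$ forces $\bar G\ge\bar H$ (both starting at $1$), taking logs and analyzing the sign of the increment $\gamma_{m+1}\log\bar G(x)-\delta_{m+1}\log\bar H(x)$, combined with the sum hypothesis, should deliver the required monotonicity.

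For the necessity direction, my plan is to exploit the explicit hazard-rate formula
\[
r_{X_{1:N}}(x)=r_g(x)\cdot\frac{\sum_m p(m)\,a_m\,\bar G(x)^{a_m}}{\sum_m p(m)\,\bar G(x)^{a_m}},
\]
together with the analogous expression for $r_{Y_{1:N}}$, and analyze $r_{X_{1:N}}(x)\le r_{Y_{1:N}}(x)$ at strategic $x$. At $x\to 0^{+}$ the weighted mixture ratios collapse to ordinary $p$-expectations, while as $x\to\infty$ the smallest-exponent term dominates; combining these asymptotic regimes with $r_g\le r_h$ should isolate and recover the sum comparison $\sum\gamma_i\le\sum\delta_i$.

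The main obstacle I anticipate is verifying the monotonicity-in-$m$ hypothesis of Theorem~\ref{ch24_3.2}, since the ratio couples two different baselines $G,H$ with two different exponent sequences $\{a_m\},\{b_m\}$, and the global sum condition does not by itself translate into an incrementwise inequality between $\gamma_{m+1}$ and $\delta_{m+1}$; the pointwise inequality $\bar G(x)\ge\bar H(x)$ has to be used carefully. The necessity direction will likewise demand care, as the factors $r_g(x)$ and $r_h(x)$ muddy any direct extraction of the sum inequality and must be disentangled through an appropriate limiting regime.
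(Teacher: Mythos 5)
Your sufficiency argument is the same route the paper takes: under the independence copula the survival functions factorize as $\bar F_{X_{1:m}}=\bar G^{a_m}$, $\bar F_{Y_{1:m}}=\bar H^{b_m}$, the fixed-$m$ hazard rates are $a_m r_g$ and $b_m r_h$, and the passage to random $N$ goes through Theorem \ref{ch24_3.2}. But the obstacle you flag at the end is a genuine gap, not a technicality. Condition (i) of Theorem \ref{ch24_3.2} asks that $\bar G(x)^{a_m}/\bar H(x)^{b_m}$ be decreasing in $m$, i.e.\ $\gamma_{m+1}\log\bar G(x)\le\delta_{m+1}\log\bar H(x)$ at every step; since $0\ge\log\bar G\ge\log\bar H$ (because $r_g\le r_h$ gives $\bar G\ge\bar H$), this forces $\gamma_{m+1}\ge\delta_{m+1}\bigl(\log\bar H/\log\bar G\bigr)\ge\delta_{m+1}$, an incrementwise inequality that the cumulative hypothesis $a_m\le b_m$ does not supply. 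A concrete failure: $\gamma=(1,1)$, $\delta=(1,100)$, $G=H$, where the ratio equals $\bar G^{\,a_m-b_m}$ and jumps from $1$ at $m=1$ to $\bar G^{-99}\ge 1$ at $m=2$. The paper's own proof papers over exactly this spot: it rewrites the ratio as $\bar G^{\,a_n-b_n}(\bar G/\bar H)^{b_n}$ and asserts decreasingness from $\sum\gamma_i\le\sum\delta_i$, but that would require $a_n-b_n$ to be \emph{increasing} in $n$ (and it ignores the second factor, which is increasing in $n$). So you have correctly located the weak joint; neither your plan nor the published argument closes it without a termwise hypothesis on $\gamma_{m+1}$ versus $\delta_{m+1}$.

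On necessity, your mixture representation $r_{X_{1:N}}=r_g\cdot\sum_m p(m)a_m\bar G^{a_m}/\sum_m p(m)\bar G^{a_m}$ and the limiting-regime plan go beyond what the paper does: its ``only if'' consists solely of the fixed-$n$ equivalence $r_{X_{1:n}}\le r_{Y_{1:n}}\iff r_g\sum\gamma_i\le r_h\sum\delta_i$. Your limit $x\to l$ yields $r_g\,E[a_N]\le r_h\,E[b_N]$, and the factor $r_h/r_g\ge 1$ cannot be stripped away, so the sum inequality will not be recovered when $r_g<r_h$ strictly (e.g.\ $r_h=2r_g$, $\gamma_i=1.5$, $\delta_i=1$ gives the hazard rate order without the sum condition). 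In short: same skeleton as the paper, and the two soft spots you anticipated are real ones that the paper's proof shares.
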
}
\begin{proof}
{According to Theorem \ref{ch24_3.2}, we only need to show that the following two conditions hold:
\begin{itemize}
    \item [(i)] $\frac{\bar{F}_{X_{1:{n}}}(x)}{\bar{F}_{Y_{1:{n}}}(x)}$ is decreasing in $n;$
    \item [(ii)] $X_{1:n}\geq_{hr} Y_{1:n}\Rightarrow r_{X_{1:n}}(x)\leq r_{Y_{1:n}}(x).$
\end{itemize}
Let
\begin{equation}
A(n)=\frac{\bar{F}_{X_{1:{n}}}(x)}{\bar{F}_{Y_{1:{n}}}(x)}=\frac{(1-G(x))^{\sum_{i=1}^{n}\gamma_{i}}}{(1-H(x))^{\sum_{i=1}^{n}\delta_{i}}}={\bar{G}(x)}^{\sum_{i=1}^{n}\gamma_{i}-\sum_{i=1}^{n}\delta_{i}}\times \left(\frac{\bar{G}(x)}{\bar{H}(x)}\right)^{\sum_{i=1}^{n}\delta_{i}}.
\end{equation}
To complete the first part we only need to show that  ${\bar{G}(x)}^{\sum_{i=1}^{n}\gamma_{i}-\sum_{i=1}^{n}\delta_{i}}$ is decreasing in $n,$  which can be proved by the given condition $\sum_{i=1}^{n}\gamma_{i}\leq \sum_{i=1}^{n}\delta_{i}.$ For the second part, we need to establish
	\begin{align*}
		r_{X_{1:n}}(x)\leq r_{Y_{1:n}}(x)
		\iff r_{g}(x)\sum\limits_{i=1}^{n}\gamma_{i}\leq r_{h}(x) \sum\limits_{i=1}^{n}\delta_{i}.
	\end{align*}
	From the given assumptions, we can then prove the above inequality, which will establish the theorem.} 
\end{proof}
{The next theorem states that if ${\boldsymbol\gamma}\succeq^{m}{\boldsymbol\delta},$ then we can also compare the random minima according to the hazard rate order, under the conditions $\boldsymbol{\alpha}=\boldsymbol{\beta}=\alpha\bm{1}_{n}$ and $G=H$. 
\begin{theorem}\label{th6}
	Let Assumption \ref{ass1} hold with $\boldsymbol{\alpha}=\boldsymbol{\beta}=\alpha\bm{1}_{n}.$ Then,
	${\boldsymbol\gamma}\succeq^{m}{\boldsymbol\delta}\Rightarrow X_{1:{N}}\geq_{hr}Y_{1:{N}},$ whenever $G=H$.
\end{theorem}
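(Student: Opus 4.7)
The plan hinges on the observation that the blanket choice $\psi_1 = \psi_2 = \psi(x) = e^{-x}$ fixed earlier in this section collapses the Archimedean survival copula to the independence copula (since $\phi(v)=-\ln v$ gives $\psi(\sum\phi(v_i))=\prod v_i$). Together with $\boldsymbol{\alpha} = \boldsymbol{\beta} = \alpha \boldsymbol{1}_{n}$ and $G = H$, this reduces the conditional minimum survival functions to the closed forms
\[
\bar{F}_{X_{1:n}}(x) = (1-G^\alpha(x))^{\sum_{i=1}^n \gamma_i}, \qquad \bar{F}_{Y_{1:n}}(x) = (1-G^\alpha(x))^{\sum_{i=1}^n \delta_i},
\]
each depending on its shape-parameter vector only through the total sum.

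Next I would invoke the definition of majorization: $\boldsymbol{\gamma} \succeq^m \boldsymbol{\delta}$ forces $\sum_{i=1}^n \gamma_i = \sum_{i=1}^n \delta_i$, so the two conditional survival functions coincide identically in $x$ and in $n$. In particular $X_{1:n}$ and $Y_{1:n}$ are identically distributed for every $n$, which a fortiori gives $X_{1:n} \geq_{hr} Y_{1:n}$ (with equality of hazard rates).

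To lift the ordering from fixed $n$ to the random sample size $N$, I would apply Theorem \ref{ch24_3.2}. Its two hypotheses --- that $\bar{F}_{X_{1:n}}(x)/\bar{F}_{Y_{1:n}}(x)$ is non-increasing in $n$ (here the ratio is identically $1$) and that $X_{1:n} \geq_{hr} Y_{1:n}$ for each $n$ (here with equality from the previous step) --- are both satisfied trivially, so the conclusion $X_{1:N} \geq_{hr} Y_{1:N}$ follows at once. There is no genuine obstacle in the argument; the one conceptual point worth recording is that under the independence copula the partial-sum inequalities packaged inside majorization are inactive, and only the equal-sum component of $\boldsymbol{\gamma} \succeq^m \boldsymbol{\delta}$ is used, so majorization is really serving as a convenient sufficient condition for $\sum \gamma_i = \sum \delta_i$.
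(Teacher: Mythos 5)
Your argument is correct, and it takes a genuinely different (and more elementary) route than the paper. The paper verifies the two hypotheses of Theorem \ref{ch24_3.2} by (i) reusing the monotonicity-in-$n$ argument from Theorem \ref{cor7} and (ii) recasting $\bar{F}_{X_{1:n}}(x)=\prod_i(1-G^{\alpha}(x))^{\gamma_i}$ as the minimum of a proportional hazard rate model with baseline $1-G^{\alpha}(x)$ and then invoking the Schur-type hazard-rate comparison of Theorem $3.1$ of Li and Li (2019) to get $r_{X_{1:n}}\leq r_{Y_{1:n}}$ from $\boldsymbol{\gamma}\succeq^{m}\boldsymbol{\delta}$. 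You instead observe that, once $\psi(x)=e^{-x}$ forces independence and $\boldsymbol{\alpha}=\boldsymbol{\beta}=\alpha\bm{1}_n$ with $G=H$, the law of the sample minimum depends on the shape vector only through $\sum_i\gamma_i$, which majorization pins down; hence $X_{1:n}\overset{d}{=}Y_{1:n}$ for every $n$, both hypotheses of Theorem \ref{ch24_3.2} hold trivially (indeed $X_{1:N}\overset{d}{=}Y_{1:N}$ directly, since $N_1\overset{st}{=}N_2\overset{st}{=}N$). Your route is shorter, avoids the external citation, and has the additional merit of exposing that under these exact hypotheses the theorem is degenerate: the hazard rate order holds with equality, and only the equal-sum component of $\succeq^{m}$ is active. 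What the paper's route buys is robustness of the template: the PHR reduction plus Li and Li's theorem is the argument that would survive if one replaced the independence copula by a general Archimedean generator, where the minima are no longer equal in distribution and majorization does real work; your collapse to $(1-G^{\alpha}(x))^{\sum\gamma_i}$ is specific to the product (independence) structure.
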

\begin{proof}
	Using Theorem \ref{ch24_3.2}, to obtain the required result, we only need to show that the following two conditions hold:
		\begin{itemize}
			\item [(i)] $\frac{\bar{F}_{X_{1:{n}}}(x)}{\bar{F}_{Y_{1:{n}}}(x)}$ is decreasing in $n;$
			\item [(ii)] $X_{1:n}\geq_{hr} Y_{1:n}\Rightarrow r_{X_{1:n}}(x)\leq r_{Y_{1:n}}(x).$
		\end{itemize}
	Utilizing a similar argument as in Theorem \ref{cor7}, one can easily prove the first part. As the survival function of $\text{Kw-G}$ distribution is same as distribution function of proportional hazard rate model if we take $G^\gamma(x),~\gamma>0,$ as the baseline. Thus, we can prove that $r_{X_{1:n}}(x)\leq r_{Y_{1:n}}(x)$ under the condition ${\boldsymbol\gamma}\succeq^{m}{\boldsymbol\delta}$ following the same lines as in Theorem $3.1$ of \cite{li2019hazard}, which completes the second part. Hence, the theorem.
\end{proof}

\begin{remark}
It is important to mention that Theorem \ref{th6} is an extension of Theorem $4.4$ of \cite{kundu2018w}, for the case of random (same) sample size.
\end{remark}}
 {The following theorem provides additional sufficient conditions on $\bm{\alpha}$ and $\bm{\beta}$ for comparing random minima according to the hazard rate order.}
    \begin{theorem}\label{hr_1}
        Let Assumption \ref{ass1} hold with $\boldsymbol{\gamma}=\boldsymbol{\delta},$ $\bm{\alpha},~\bm{\beta},~\boldsymbol{\gamma}\in \mathcal{E}_{+}(\mathcal{D}_{+})$. Then, $\boldsymbol{\alpha}\succeq^{w}\boldsymbol{\beta}\Rightarrow X_{1:N}\geq_{hr} Y_{1:N},$ whenever $G=H$.
    \end{theorem}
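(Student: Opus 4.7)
The plan is to follow the same route used in the proofs of Theorems \ref{cor7} and \ref{th6}, namely to invoke Theorem \ref{ch24_3.2}. Since here $\psi_1=\psi_2=\psi(x)=e^{-x}$ corresponds to the independence copula, the $X_i$'s (resp.\ the $Y_i$'s) are independent, so
$\bar{F}_{X_{1:n}}(x)=\prod_{i=1}^{n}(1-G^{\alpha_i}(x))^{\gamma_i}$
and the analogous product expression holds for $\bar{F}_{Y_{1:n}}(x)$ after using $G=H$ and $\boldsymbol{\gamma}=\boldsymbol{\delta}$. Theorem \ref{ch24_3.2} then reduces the theorem to verifying two conditions: (i) the ratio $\bar{F}_{X_{1:n}}(x)/\bar{F}_{Y_{1:n}}(x)$ is decreasing in $n\in N_{+}$; and (ii) $X_{1:n}\geq_{hr} Y_{1:n}$ for each fixed $n$.

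For (ii), the independence structure makes the hazard rate of the minimum additive, so $r_{X_{1:n}}(x)=g(x)\sum_{i=1}^{n}\gamma_i\,\varphi(\alpha_i;x)$, where I set $\varphi(z;x):=zG^{z-1}(x)/(1-G^z(x))$. A direct differentiation, analogous to Lemma $3.1$ of \cite{balakrishnan2015stochastic} as deployed in Theorem \ref{th1}, shows that $\varphi(\cdot\,;x)$ is decreasing. The hazard rate inequality $r_{X_{1:n}}(x)\leq r_{Y_{1:n}}(x)$ is then equivalent to $T(\boldsymbol{\alpha})\leq T(\boldsymbol{\beta})$ for $T(\boldsymbol{z}):=\sum_{i}\gamma_i\,\varphi(z_i;x)$. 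I would next argue that $T$ is coordinatewise decreasing and Schur-convex on $\mathcal{E}_+(\mathcal{D}_+)$, by checking the sign of $(z_i-z_j)\bigl(\gamma_i\varphi'(z_i;x)-\gamma_j\varphi'(z_j;x)\bigr)$ on the ordered cone and exploiting the co-monotone arrangement of $\boldsymbol{\gamma}$ and $\boldsymbol{z}$ (in the spirit of Lemma $3.3$ of \cite{kundu2016some}). Combining these monotonicity and Schur properties of $T$ with the weak supermajorization $\boldsymbol{\alpha}\succeq^w\boldsymbol{\beta}$ via Theorem A.8 of \cite{Marshall2011} then delivers (ii).

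For (i), the ratio factors as $\prod_{i=1}^{n}\bigl[(1-G^{\alpha_i}(x))/(1-G^{\beta_i}(x))\bigr]^{\gamma_i}$, and is decreasing in $n$ provided the incremental factor at each step is at most one; I would try to secure this using the shared ordering of $\boldsymbol{\alpha},\boldsymbol{\beta},\boldsymbol{\gamma}$ on $\mathcal{E}_+(\mathcal{D}_+)$ together with the weak supermajorization relation, perhaps after passing to logarithms and using the integral representation $\log(1-G^{\beta_i})-\log(1-G^{\alpha_i})=\int_{\alpha_i}^{\beta_i}\!\!G^s(\ln G)/(1-G^s)\,ds$. Once both (i) and (ii) are established, Theorem \ref{ch24_3.2} yields $X_{1:N}\geq_{hr}Y_{1:N}$. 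The main obstacle I anticipate is (i): weak supermajorization $\boldsymbol{\alpha}\succeq^w\boldsymbol{\beta}$ does not by itself deliver a pointwise comparison of $\alpha_i$ and $\beta_i$, so the monotonicity-in-$n$ claim cannot be reduced to a componentwise factor bound and must instead be extracted from the combined ordered structure of the parameter vectors, in a manner similar to how equality of sums was used to trivialize the analogous step in the proof of Theorem \ref{th6}.
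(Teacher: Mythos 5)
Your proposal follows the same route as the paper's proof: reduce via Theorem \ref{ch24_3.2} to the two conditions (i) the ratio $\bar{F}_{X_{1:n}}(x)/\bar{F}_{Y_{1:n}}(x)$ is decreasing in $n$ and (ii) the fixed-sample-size order $X_{1:n}\geq_{hr}Y_{1:n}$. For (ii) the paper simply cites Theorem $3.3$ of \cite{kundu2018w}; the Schur-convexity/monotonicity argument you sketch for $T(\boldsymbol{z})=\sum_i\gamma_i\varphi(z_i;x)$, combined with Theorem A.8 of \cite{Marshall2011}, is essentially a re-derivation of that cited result, so this part matches the paper in substance.

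The place where you stop short is step (i), and your instinct that it is the real obstacle is correct --- moreover, the paper does not actually supply the missing argument. The increment of the ratio is $B(n+1)/B(n)=\bigl[(1-G^{\alpha_{n+1}}(x))/(1-G^{\beta_{n+1}}(x))\bigr]^{\gamma_{n+1}}$, which is at most $1$ precisely when $\alpha_{n+1}\leq\beta_{n+1}$. As you note, $\boldsymbol{\alpha}\succeq^{w}\boldsymbol{\beta}$ together with the common monotone arrangement on $\mathcal{E}_{+}$ controls only partial sums (it forces $\beta_1\geq\alpha_1$ but nothing coordinatewise beyond that): for instance $\boldsymbol{\alpha}=(1,3)$ and $\boldsymbol{\beta}=(2.5,2.6)$ are both in $\mathcal{E}_{+}$ and satisfy $\boldsymbol{\alpha}\succeq^{w}\boldsymbol{\beta}$, yet $\alpha_2>\beta_2$, so $B(2)>B(1)$ and the ratio is not decreasing in $n$. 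The paper's own proof disposes of (i) with the single phrase ``which can be concluded from the given assumption'' (and even states the sufficient inequality in the wrong direction, $B(n+1)/B(n)>1$, for a ratio that is supposed to be decreasing), so you have not overlooked an idea that the paper provides; you have identified a gap that the paper glosses over. Closing it would require either an additional componentwise hypothesis such as $\alpha_i\leq\beta_i$ for all $i$, or a reduction that avoids the monotonicity-in-$n$ condition altogether.
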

    \begin{proof}
        { Employing Theorem \ref{ch24_3.2}, to obtain the required result, we only need to verify the following two conditions:
        \begin{itemize}
    \item [(i)] $\frac{\bar{F}_{X_{1:{n}}}(x)}{\bar{F}_{Y_{1:{n}}}(x)}$ is decreasing in $n;$
    \item [(ii)] $ X_{1:n}\geq_{hr} Y_{1:n} \Rightarrow r_{X_{1:n}}(x)\leq r_{Y_{1:n}}(x).$
\end{itemize}
Denote $B(n)=\frac{\bar{F}_{X_{1:{n}}}(x)}{\bar{F}_{Y_{1:{n}}}(x)}=\frac{\prod_{i=1}^{n}(1-G^{\alpha_{i}})^\gamma}{\prod_{i=1}^{n}(1-G^{\beta_{i}})^{\gamma}}.$ To check  $\frac{\bar{F}_{X_{1:{n}}}(x)}{\bar{F}_{Y_{1:{n}}}(x)}$ is decreasing in $n,$ it is sufficient to show that $\frac{B(n+1)}{B(n)}>1,$ which can be concluded from the given assumption. Now, applying Theorem $3.3$ of \cite{kundu2018w}, we can conclude that $r_{X_{1:n}}(x)\leq r_{Y_{1:n}}(x)$ under the condition $\boldsymbol{\alpha}\succeq^{w}\boldsymbol{\beta},$ which completes the second part. Hence, the theorem.}
       \end{proof}

    In the above theorem, we have considered the case when the baseline distributions are equal. But, it will be of more interest to study the case when the baseline distributions are not equal. The following result provides a positive answer to this question.
\begin{theorem}\label{th12}
	Let Assumption \ref{ass1} hold with $\boldsymbol{\gamma}=\boldsymbol{\delta},$ $\bm{\alpha},~\bm{\beta},~\boldsymbol{\gamma}\in \mathcal{E}_{+}(\mathcal{D}_{+}).$ Then, $\boldsymbol{\alpha}\succeq^{w}\boldsymbol{\beta}\Rightarrow X_{1:N}\leq_{hr} Y_{1:N},$ whenever $r_{g}\geq r_{h}$.
\end{theorem}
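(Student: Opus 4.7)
My plan is to mirror the template used in the proof of Theorem \ref{hr_1}, replacing its baseline hypothesis $G=H$ with the hazard-rate dominance $r_{g}\geq r_{h}$. Since the desired conclusion is $X_{1:N}\leq_{hr} Y_{1:N}$, I would invoke Theorem \ref{ch24_3.1} to reduce the statement to two fixed-$n$ verifications: (i) the ratio $B(n):=\bar{F}_{X_{1:n}}(x)/\bar{F}_{Y_{1:n}}(x)$ is increasing in $n$ over $\mathrm{supp}(N)$, and (ii) $X_{1:n}\leq_{hr} Y_{1:n}$, equivalently $r_{X_{1:n}}(x)\geq r_{Y_{1:n}}(x)$, for every fixed $n$.

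For (i), since $\psi(x)=e^{-x}$ yields the product form $\bar{F}_{X_{1:n}}(x)=\prod_{i=1}^{n}(1-G^{\alpha_{i}}(x))^{\gamma_{i}}$ and an analogous expression for $Y$ (with $\boldsymbol{\gamma}=\boldsymbol{\delta}$), I would compute
\begin{equation*}
\frac{B(n+1)}{B(n)}=\left(\frac{1-G^{\alpha_{n+1}}(x)}{1-H^{\beta_{n+1}}(x)}\right)^{\gamma_{n+1}}.
\end{equation*}
The hypothesis $r_{g}\geq r_{h}$ gives $\bar{G}\leq \bar{H}$, i.e.\ $G\geq H$; combining this with $\boldsymbol{\alpha},\boldsymbol{\beta}\in\mathcal{E}_{+}$ and $\boldsymbol{\alpha}\succeq^{w}\boldsymbol{\beta}$, I would argue the pointwise inequality $G^{\alpha_{n+1}}(x)\leq H^{\beta_{n+1}}(x)$ and conclude $B(n+1)/B(n)\geq 1$. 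For (ii), using $g=r_{g}\bar{G}$ and $h=r_{h}\bar{H}$, I would decompose
\begin{equation*}
r_{X_{1:n}}(x)=r_{g}(x)\sum_{i=1}^{n}\gamma_{i}\,\tau(G(x),\alpha_{i}),\qquad r_{Y_{1:n}}(x)=r_{h}(x)\sum_{i=1}^{n}\gamma_{i}\,\tau(H(x),\beta_{i}),
\end{equation*}
with $\tau(u,\alpha)=\alpha u^{\alpha-1}(1-u)/(1-u^{\alpha})$. The scalar $r_{g}\geq r_{h}$ boosts the $X$-side, and the remaining sum comparison would be closed by invoking Theorem 3.3 of Kundu 2018w (exactly as in the proof of Theorem \ref{hr_1}) together with the auxiliary comparison $G\geq H$ delivered by $r_{g}\geq r_{h}$.

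The main obstacle is coordinating the baseline comparison (driven by $r_{g}\geq r_{h}$, hence $G\geq H$) with the parameter majorization $\boldsymbol{\alpha}\succeq^{w}\boldsymbol{\beta}$: both conditions must cooperate to produce a ratio $\geq 1$ in (i) pointwise in $x$ at every transition $n\mapsto n+1$, and to push the $\tau$-sum in (ii) in the correct direction once $r_{g}/r_{h}$ is factored out. I expect the verification of (i) uniformly in $x$ to be the trickier step, since neither $G\geq H$ alone nor $\boldsymbol{\alpha}\succeq^{w}\boldsymbol{\beta}$ alone suffices for the componentwise inequality $G^{\alpha_{n+1}}(x)\leq H^{\beta_{n+1}}(x)$; the ascending structure of $\mathcal{E}_{+}$ and the weak supermajorization must be used jointly at the index $n+1$.
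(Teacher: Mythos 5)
Your overall strategy coincides with the paper's: reduce via Theorem \ref{ch24_3.1} to (i) monotonicity in $n$ of the ratio $\bar{F}_{X_{1:n}}(x)/\bar{F}_{Y_{1:n}}(x)$ and (ii) the fixed-$n$ order $X_{1:n}\leq_{hr}Y_{1:n}$, and settle (ii) by writing $r_{X_{1:n}}(x)=r_g(x)\sum_{i}\gamma_i\,\tau(G(x),\alpha_i)$ with $\tau(u,\alpha)=\alpha u^{\alpha-1}(1-u)/(1-u^{\alpha})$ and appealing to the Schur-convexity/monotonicity machinery. The paper cites Lemma 3.3 of \cite{kundu2016some} and Lemma 7 of \cite{balakrishnan2015stochastic} where you cite Theorem 3.3 of \cite{kundu2018w}, and it first restricts to $\bm{\alpha},\bm{\beta}\geq\bm{1}_n$ (treating the other case ``similarly''), but these are cosmetic differences; your part (ii) is essentially the paper's.

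The genuine gap is in part (i), and you have in fact put your finger on it yourself without resolving it. You need $B(n+1)/B(n)=\bigl((1-G^{\alpha_{n+1}}(x))/(1-H^{\beta_{n+1}}(x))\bigr)^{\gamma_{n+1}}\geq 1$, i.e.\ $G^{\alpha_{n+1}}(x)\leq H^{\beta_{n+1}}(x)$. But $r_g\geq r_h$ gives $\bar G\leq\bar H$, i.e.\ $G\geq H$, hence $G^{c}\geq H^{c}$ for any common exponent $c$ --- this pushes in the \emph{wrong} direction --- and $\bm{\alpha}\succeq^{w}\bm{\beta}$ with $\bm{\alpha},\bm{\beta}\in\mathcal{E}_{+}$ gives no componentwise control of $\alpha_{n+1}$ versus $\beta_{n+1}$: for instance $\bm{\alpha}=(1,1)\succeq^{w}(1,2)=\bm{\beta}$, and then $G^{\alpha_2}=G\geq H\geq H^{2}=H^{\beta_2}$, so the ratio is \emph{decreasing} at that transition. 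The pointwise inequality you propose to ``argue'' is therefore false in general under the stated hypotheses, and no joint use of the two conditions rescues it. To be fair, the paper's own proof dismisses this step with ``the same steps as in Theorem \ref{hr_1}'', where $G=H$ and the comparison reduces to $\alpha_{n+1}$ versus $\beta_{n+1}$ alone; the published argument is thus incomplete at exactly the point you flagged as the trickier one. So: correct identification of the weak link, but the resolution you sketch does not close it.
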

\begin{proof}
First, we consider the case when $\bm{\alpha},~\bm{\beta}\geq\bm{1}_n.$ The other part can be done in a similar way. {Applying Theorem \ref{ch24_3.1}, we first need to prove that $\frac{\bar{F}_{X_{1:{n}}}(x)}{\bar{F}_{Y_{1:{n}}}(x)}$ is increasing in $n$ and then $X_{1:n}\leq_{hr} Y_{1:n}.$ The first part can be established using the same steps as in Theorem \ref {hr_1}. To complete the second part, we have to check whether $r_{X_{1:n}}(x)\geq r_{Y_{1:n}}(x)$, where  
	\begin{equation}\label{r1}	    
	r_{X_{1:n}}(x)=\sum\limits_{i=1}^{n} \frac{\alpha_{i}\gamma_{i}g(x) G^{\alpha_{i}-1}(x)}{1-G^{\alpha_{i}}(x)}
	=\sum\limits_{i=1}^{n} \frac{\alpha_{i}\gamma_{i}r_{g}(x)(1-G(x)) G^{\alpha_{i}-1}(x)}{1-G^{\alpha_{i}}(x)}
	\end{equation}
    represents the hazard rate function of $X_{1:n}.$ Similarly, we can write an expression for $r_{Y_{1:n}}(x)$ from \eqref{r1} upon replacing $\alpha_{i}$ by $\beta_{i},$ for $i=1,\ldots,n.$
According to Lemma $3.3$ of \cite{kundu2016some} and Lemma $7$ of \cite{balakrishnan2015stochastic}, we can then show that $r_{X_{1:n}}(x)\geq r_{Y_{1:n}}(x)$ under the condition $\boldsymbol{\alpha}\succeq^{w}\boldsymbol{\beta}.$ This establishes the theorem.}
\end{proof} 

In the following result, we establish some sufficient conditions for comparing two random maxima according to the reversed hazard rate order. { To prove the next Theorem, we will use the following result.

\begin{theorem}(Theorem $3.4$ of \cite{chowdhury2024})\label{ch24_3.3}
    Suppose $X_{n:n}\sim \bar{F}_{n:n}(x)$ and $X_{n:n}\sim \bar{G}_{n:n}(x).$ Also let the support of a positive
integer valued random variable $N$ having pmf $p(n)$ be $N_{+}$. Now, for all $x\geq l$, if $\tilde{r}_{Y_{n:n}}(x)$,
the reversed hazard rate function of $Y_{n:n}$ is increasing in $n\in N_{+}$ and $\frac{F_{n:n}(x)}{G_{n:n}(x)}$ is increasing
in $n\in N_{+}$, then $X_{n:n}\geq_{rh} Y_{n:n}$ implies $X_{N:N}\geq_{rh} Y_{N:N}$.
\end{theorem}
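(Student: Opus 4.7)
The plan is to reduce the claim $\tilde{r}_{X_{N:N}}(x) \geq \tilde{r}_{Y_{N:N}}(x)$ to a pointwise double-sum inequality in $n,m$, and to close it by a swap-symmetrization that feeds all three hypotheses in at once. First I would write the mixtures $F_{X_{N:N}}(x) = \sum_n F_{n:n}(x) p(n)$ and $F_{Y_{N:N}}(x) = \sum_n G_{n:n}(x) p(n)$, so that after cross-multiplying, the target reversed-hazard inequality is equivalent to
\[
S(x) := \sum_{n,m} \bigl[f_{n:n}(x) G_{m:m}(x) - g_{m:m}(x) F_{n:n}(x)\bigr] p(n) p(m) \geq 0.
\]
Abbreviating $a_n = F_{n:n}(x)$, $b_n = G_{n:n}(x)$, $\tilde{r}^F_n = f_{n:n}(x)/F_{n:n}(x)$ and $\tilde{r}^G_n = g_{n:n}(x)/G_{n:n}(x)$, the diagonal terms $n=m$ contribute $a_n b_n (\tilde{r}^F_n - \tilde{r}^G_n) \geq 0$ directly from $X_{n:n} \geq_{rh} Y_{n:n}$.

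The off-diagonal terms I would handle in symmetric pairs $(n,m)$ and $(m,n)$, whose joint contribution is
\[
T_{n,m} = a_n b_m (\tilde{r}^F_n - \tilde{r}^G_m) + a_m b_n (\tilde{r}^F_m - \tilde{r}^G_n).
\]
Fix $n > m$ without loss of generality. The three hypotheses then supply: (i) $a_n b_m \geq a_m b_n > 0$, from monotonicity of $F_{n:n}/G_{n:n}$ in $n$; (ii) $\tilde{r}^F_k \geq \tilde{r}^G_k$ for every $k$, from $X_{k:k} \geq_{rh} Y_{k:k}$; (iii) $\tilde{r}^G_n \geq \tilde{r}^G_m$, from monotonicity of $\tilde{r}_{Y_{n:n}}$ in $n$. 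Chaining (ii) and (iii) yields $\tilde{r}^F_n - \tilde{r}^G_m \geq \tilde{r}^G_n - \tilde{r}^G_m \geq 0$, so the first bracket of $T_{n,m}$ is non-negative.

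The main obstacle is the second bracket $\tilde{r}^F_m - \tilde{r}^G_n$, whose sign cannot be pinned down from the hypotheses: (ii) only places $\tilde{r}^F_m$ above $\tilde{r}^G_m$, while (iii) only places $\tilde{r}^G_n$ above $\tilde{r}^G_m$. The resolution is the identity
\[
(\tilde{r}^F_n - \tilde{r}^G_m) + (\tilde{r}^F_m - \tilde{r}^G_n) = (\tilde{r}^F_n - \tilde{r}^G_n) + (\tilde{r}^F_m - \tilde{r}^G_m) \geq 0
\]
(again by (ii)), which forces $\tilde{r}^F_n - \tilde{r}^G_m \geq |\tilde{r}^F_m - \tilde{r}^G_n|$ whenever the second bracket is negative. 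Combined with $a_n b_m \geq a_m b_n$ from (i), this gives in the adverse case
\[
T_{n,m} \geq a_n b_m\,|\tilde{r}^F_m - \tilde{r}^G_n| - a_m b_n\,|\tilde{r}^F_m - \tilde{r}^G_n| = (a_n b_m - a_m b_n)\,|\tilde{r}^F_m - \tilde{r}^G_n| \geq 0,
\]
while in the favorable case both terms of $T_{n,m}$ are already non-negative. Summing the non-negative pair contributions over $n>m$ together with the non-negative diagonal yields $S(x) \geq 0$ pointwise on $x \geq l$, which establishes $X_{N:N} \geq_{rh} Y_{N:N}$.
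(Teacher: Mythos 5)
Your argument is correct. Note first that the paper itself gives no proof of this statement: it is quoted verbatim (typos included --- the second ``$X_{n:n}$'' should be $Y_{n:n}$) from Theorem~3.4 of Kundu, Chowdhury and Modok (2024) and used as a black box, so there is nothing internal to compare against; your write-up is a genuine standalone verification. The reduction to $S(x)\geq 0$ by cross-multiplication is the standard symmetrization route for mixture comparisons of this type, and the pairing of $(n,m)$ with $(m,n)$ is handled cleanly: the diagonal uses only $X_{n:n}\geq_{rh}Y_{n:n}$; the first bracket of $T_{n,m}$ is nonnegative by chaining $\tilde r^F_n\geq\tilde r^G_n\geq\tilde r^G_m$ (this is exactly where the monotonicity of $\tilde r_{Y_{n:n}}$ in $n$ is indispensable --- without it the favorable case $v\geq 0$ would not force $u\geq 0$); and the identity $u+v=(\tilde r^F_n-\tilde r^G_n)+(\tilde r^F_m-\tilde r^G_m)\geq 0$ combined with $a_nb_m\geq a_mb_n$ correctly disposes of the adverse case. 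All three hypotheses are used, and each is used where it is actually needed. The only caveat worth recording is the implicit assumption that $F_{n:n}(x)$ and $G_{n:n}(x)$ are strictly positive so that the cross-multiplication and the quantities $\tilde r^F_n,\tilde r^G_n$ are well defined; this holds for $x>l$ but degenerates at $x=l$ itself, which is the usual harmless boundary convention for reversed hazard rate comparisons.
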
}

\begin{theorem}\label{th13}
	Let Assumption \ref{ass1} hold with $\boldsymbol{\gamma}=\boldsymbol{\delta}=\gamma\bm{1}_n ,$ $\bm{\alpha},~\bm{\beta}\in \mathcal{E}_{+}(\mathcal{D}_{+})$. Then, $\boldsymbol{\alpha}\succeq^{w}\boldsymbol{\beta}\Rightarrow X_{N:N}\geq_{rh} Y_{N:N},$ whenever $\bm{\alpha},~\bm{\beta}\geq\bm{1}_n,$ $G\leq H$ and $g\geq h$.
\end{theorem}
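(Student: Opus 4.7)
The natural strategy is to invoke Theorem~\ref{ch24_3.3}, which reduces the random-$N$ conclusion to three facts about the fixed sample-size maxima: monotonicity in $n$ of $\tilde r_{Y_{n:n}}$, monotonicity in $n$ of the ratio $F_{X_{n:n}}/F_{Y_{n:n}}$, and the fixed-$n$ statement $X_{n:n}\ge_{rh} Y_{n:n}$. Because in this subsection $\psi_1=\psi_2=e^{-x}$ (the independence copula), we have
\begin{equation*}
\tilde r_{Y_{n:n}}(x)=\sum_{i=1}^{n}\tilde r_{Y_i}(x),
\end{equation*}
which is a sum of positive terms and hence automatically increasing in $n$, disposing of the first condition.

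For the second condition, write
\begin{equation*}
\frac{F_{X_{n:n}}(x)}{F_{Y_{n:n}}(x)}=\prod_{i=1}^{n}\frac{1-(1-G^{\alpha_i}(x))^{\gamma}}{1-(1-H^{\beta_i}(x))^{\gamma}},
\end{equation*}
and examine the successive factor at index $n+1$. Using $\bm\alpha,\bm\beta\ge \bm 1_n$, $G\le H$ and $\bm\alpha\succeq^{w}\bm\beta$ within the $\mathcal E_+(\mathcal D_+)$ cone, the requisite monotonicity in $n$ can be checked by the same direct calculation as in the proof of Theorem~\ref{hr_1}.

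The substantive step is the fixed-$n$ comparison $X_{n:n}\ge_{rh} Y_{n:n}$, which under independence amounts to
\begin{equation*}
\sum_{i=1}^{n}\frac{\alpha_i\gamma g(x)G^{\alpha_i-1}(x)(1-G^{\alpha_i}(x))^{\gamma-1}}{1-(1-G^{\alpha_i}(x))^{\gamma}}\ge \sum_{i=1}^{n}\frac{\beta_i\gamma h(x)H^{\beta_i-1}(x)(1-H^{\beta_i}(x))^{\gamma-1}}{1-(1-H^{\beta_i}(x))^{\gamma}}.
\end{equation*}
I would split this into a baseline change (from $(H,h)$ to $(G,g)$ while keeping $\bm\beta$ fixed, using $G\le H$, $g\ge h$ and $\bm\beta\ge \bm 1_n$) followed by a parameter change (from $\bm\beta$ to $\bm\alpha$ under the common baseline $G$, using $\bm\alpha\succeq^{w}\bm\beta$). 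The parameter step mirrors the argument of Theorem~\ref{th12}: show that $\alpha\mapsto \frac{\alpha\gamma g G^{\alpha-1}(1-G^{\alpha})^{\gamma-1}}{1-(1-G^{\alpha})^{\gamma}}$ is decreasing and Schur-convex on $[1,\infty)^n$, and invoke Theorem~A.8 of \cite{Marshall2011} together with Lemma~3.3 of \cite{kundu2016some} and Lemma~7 of \cite{balakrishnan2015stochastic} to close the inequality.

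The main obstacle will be the baseline-change step. Replacing $H$ by $G$ simultaneously raises the density factor ($g\ge h$), lowers the polynomial factor $G^{\alpha-1}$, and alters the rational factor $(1-G^{\alpha})^{\gamma-1}/[1-(1-G^{\alpha})^{\gamma}]$ in a nontrivial direction, so the net monotonicity of a single summand in the baseline is not transparent and must be obtained by signing the derivative carefully. It is precisely this sign computation that requires the three hypotheses $G\le H$, $g\ge h$ and $\bm\alpha,\bm\beta\ge \bm 1_n$ jointly; once the baseline change is secured, the remainder of the argument is formally identical to the parameter-change portion of the proof of Theorem~\ref{th12}.
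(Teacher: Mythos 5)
Your proposal follows essentially the same route as the paper: both reduce to Theorem~\ref{ch24_3.3}, dispose of the two monotonicity-in-$n$ conditions by the positivity of the summands and the argument of Theorem~\ref{th12}/\ref{hr_1}, and settle the fixed-$n$ comparison by showing each summand of $\tilde r_{X_{n:n}}$ (the paper factors it as $A(\alpha_i)B(\alpha_i)$, identical to your single fraction) is decreasing and convex in $\alpha_i$ so that the sum is decreasing and Schur-convex, then invoking Theorem~A.8 of \cite{Marshall2011}. The baseline-change step you single out as the main obstacle is precisely the point the paper treats most cursorily, so your explicit flagging of it is if anything more careful than the published argument.
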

\begin{proof}
{The reversed hazard rate function of $X_{n:n}$ can be written as
	\begin{equation*}	    
	\tilde{r}_{X_{n:n}}(x)=\sum\limits_{i=1}^{n} A(\alpha_i)B(\alpha_i),
	\end{equation*}
	where $A(\alpha_i)=\frac{\alpha_{i}g(x) G^{\alpha_{i}-1}(x)}{1-G^{\alpha_{i}}(x)}$ and $B(\alpha_i) =\frac{\gamma(1-G^{\alpha_{i}}(x))^{\gamma} }{1-(1-G^{\alpha_{i}}(x))^{\gamma} },$ for $i=1,\ldots,n.$
    Using the same arguments as in Theorem \ref{th12}, we can easily verify that $\frac{{F}_{X_{n:{n}}}(x)}{{F}_{Y_{n:{n}}}(x)}$ is increasing in $n$ using the given assumptions. Now,  
\begin{equation}
\tilde{r}_{X_{{n+1}:{n+1}}}(x)-\tilde{r}_{X_{n:n}}(x)=A(\alpha_{n+1})B(\alpha_{n+1})\geq 0, 
\end{equation}
which shows that $\tilde{r}_{X_{n:n}}(x)$ is increasing in $n.$ Finally, by applying Theorem \ref{ch24_3.3}, we only need to prove that $X_{n:n}\geq_{rh} Y_{n:n}$ which is equivalent to establishing that $\tilde r_{X_{n:n}}(x)\geq \tilde{r}_{Y_{n:n}}(x).$ } From Theorem \ref{th12}, $A(\alpha_i)$ is decreasing and convex in $\alpha_i$. Similarly, after some mathematical calculations, $B(\alpha_i)$ is also {seen to be} decreasing and convex in $\alpha_i$. We also know  that every function that is convex and symmetric under permutations of the arguments is also Schur-convex. Therefore, $\tilde{r}_{X_{n:n}}(x)$ is decreasing and Schur-convex in $\bm{\alpha}\in\mathcal{E}_{+}(\mathcal{D}_{+}).$ {Finally, employing Theorem $A.8$ of \cite{Marshall2011}, we obtain $\tilde r_{X_{n:n}}(x)\geq \tilde{r}_{Y_{n:n}}(x)$ under the condition $\boldsymbol{\alpha}\succeq^{w}\boldsymbol{\beta}$. Hence, the theorem.}
\end{proof} 
Sometimes, we need to compare two probability distributions by their variability or spread instead of doing it based on survival functions or hazard rate functions. In this regard, dispersive ordering is one of the basic concepts for comparing based on spread. In the following, we develop some sufficient conditions for comparing two random minima under same (random) sample size in terms of dispersive order. We use the idea of Theorem $3.B.20$ of \cite{shaked2007stochastic} and Theorem \ref{th12} to establish these results.
\begin{theorem}\label{th13dis1}
	Let Assumption \ref{ass1} hold with $\boldsymbol{\gamma}=\boldsymbol{\delta},$ $\bm{\alpha},~\bm{\beta},~\boldsymbol{\gamma}\in \mathcal{E}_{+}(\mathcal{D}_{+}).$ Then, $\boldsymbol{\alpha}\succeq^{w}\boldsymbol{\beta}\Rightarrow X_{1:N}\leq_{disp} Y_{1:N},$ whenever $\bm{\alpha},~\bm{\beta}\leq\bm{1}_n,$ $r_{g}\geq r_{h}$ and $r_{g}(x)$ or $r_{h}(x)$ is decreasing.
\end{theorem}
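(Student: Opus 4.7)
The plan is to combine the hazard rate comparison already delivered by Theorem \ref{th12} with Theorem $3.B.20$ of \cite{shaked2007stochastic}, which asserts that $X\leq_{hr} Y$ together with the DFR property of either $X$ or $Y$ implies $X\leq_{disp} Y$. Accordingly, I would split the argument into two parts: first, verifying the hazard rate ordering $X_{1:N}\leq_{hr} Y_{1:N}$; second, proving that $X_{1:N}$ (or symmetrically $Y_{1:N}$) has decreasing failure rate.

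The first part is immediate: the hypotheses $\boldsymbol{\gamma}=\boldsymbol{\delta}$, $\boldsymbol{\alpha},\boldsymbol{\beta},\boldsymbol{\gamma}\in\mathcal{E}_+(\mathcal{D}_+)$, $\boldsymbol{\alpha}\succeq^w\boldsymbol{\beta}$, and $r_g\geq r_h$ assumed in Theorem \ref{th13dis1} coincide with those of Theorem \ref{th12}, giving $X_{1:N}\leq_{hr} Y_{1:N}$ at once. For the DFR step, I would use the independence setting $\psi(x)=e^{-x}$ currently in force, so that the hazard rate of the minimum factorizes as $r_{X_{1:n}}(x)=\sum_{i=1}^n r_{X_i}(x)$ with
\[
r_{X_i}(x)=\alpha_i\gamma_i\, r_g(x)\cdot h_{\alpha_i}(G(x)), \qquad h_\alpha(u)=\frac{(1-u)u^{\alpha-1}}{1-u^\alpha}.
\]
Assuming without loss of generality that $r_g$ is the decreasing one, the first factor $r_g(x)$ is already monotone, so it remains to show that $h_\alpha$ is decreasing on $(0,1)$ for $\alpha\leq 1$. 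A short logarithmic-derivative computation gives
\[
\frac{h_\alpha'(u)}{h_\alpha(u)}\cdot u(1-u)(1-u^\alpha)=(\alpha-1)-\alpha u+u^\alpha,
\]
and the right-hand side vanishes at $u=1$ while its derivative $\alpha(1-u^{\alpha-1})$ is $\leq 0$ on $(0,1]$ when $\alpha\leq 1$; hence the expression is nonpositive on $(0,1]$. Therefore each $r_{X_i}$ is decreasing, $r_{X_{1:n}}$ is decreasing, i.e.\ $X_{1:n}$ is DFR for every fixed $n$. Since a mixture of DFR distributions is itself DFR (a classical preservation result, see \cite{Marshall2011}), the random-size minimum $X_{1:N}$ is DFR as well. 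Combining this with the hazard rate ordering via Theorem $3.B.20$ of \cite{shaked2007stochastic} then yields $X_{1:N}\leq_{disp} Y_{1:N}$.

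The main obstacle is the monotonicity of $h_\alpha$: the function is identically one when $\alpha=1$, so one cannot appeal to a strict sign, and the key inequality $\alpha-1-\alpha u+u^\alpha\leq 0$ on $(0,1)$ really does use $\alpha\leq 1$ in an essential way, which is why the hypothesis $\boldsymbol{\alpha},\boldsymbol{\beta}\leq\boldsymbol{1}_n$ cannot be relaxed. The alternative branch in which $r_h$ rather than $r_g$ is assumed decreasing is handled in exactly the same way by establishing DFR of $Y_{1:N}$ instead, for which the bound $\boldsymbol{\beta}\leq\boldsymbol{1}_n$ is needed.
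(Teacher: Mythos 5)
Your proposal coincides with the paper's (essentially unwritten) proof: the paper merely remarks that this theorem follows by combining Theorem \ref{th12} with Theorem $3.B.20$ of \cite{shaked2007stochastic}, and your DFR verification for the minimum under $\bm{\alpha}\leq\bm{1}_n$ plus the mixture-preservation step supplies exactly the details that argument needs. One bookkeeping slip: the derivative of $(\alpha-1)-\alpha u+u^{\alpha}$ is $\alpha(u^{\alpha-1}-1)\geq 0$ on $(0,1]$ for $\alpha\leq 1$, not $\alpha(1-u^{\alpha-1})\leq 0$, so the expression \emph{increases} to its zero at $u=1$ and is therefore nonpositive --- your final conclusion is right, but as written (a quantity decreasing to a zero at $u=1$) the inference would yield the opposite sign.
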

Using the same argument as in Theorem \ref{th13dis1}, we can establish the following theorem. 
\begin{theorem}\label{th16}
	Under the assumptions of Theorem \ref{th6}, we have
	${\boldsymbol\gamma}\succeq^{m}{\boldsymbol\delta}\Rightarrow X_{1:N}\leq_{disp}Y_{1:N},$ whenever $r_{g}(x)$ is decreasing.
\end{theorem}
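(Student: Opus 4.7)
The plan is to mirror the proof strategy of Theorem \ref{th13dis1}: combine the hazard rate comparison already furnished by Theorem \ref{th6} with Theorem 3.B.20 of \cite{shaked2007stochastic}, which states that a hazard rate ordering together with the decreasing failure rate (DFR) property of one of the random variables automatically upgrades to the corresponding dispersive ordering.

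The first step is to apply Theorem \ref{th6} directly under the stated hypotheses ($\boldsymbol{\alpha}=\boldsymbol{\beta}=\alpha\boldsymbol{1}_n$, $G=H$, $N_{1}\overset{st}{=}N_{2}\overset{st}{=}N$, generator $\psi(x)=e^{-x}$): this yields the hazard rate comparison between $X_{1:N}$ and $Y_{1:N}$ under the majorization hypothesis $\boldsymbol{\gamma}\succeq^{m}\boldsymbol{\delta}$, requiring no additional work.

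The second step is to verify that at least one of $X_{1:N}$ and $Y_{1:N}$ has a decreasing failure rate. Conditionally on $N=n$, the hazard rate of $X_{1:n}$ is $\frac{\alpha g(x)G^{\alpha-1}(x)}{1-G^{\alpha}(x)}\sum_{i=1}^{n}\gamma_i$. Substituting $g=r_g\cdot(1-G)$ splits this into the product of $r_g(x)$ and $\frac{\alpha(1-G(x))G^{\alpha-1}(x)}{1-G^{\alpha}(x)}\sum_{i=1}^{n}\gamma_i$. The hypothesis that $r_g$ is decreasing controls the first factor, while a direct monotonicity check on the auxiliary factor $\frac{(1-G)G^{\alpha-1}}{1-G^{\alpha}}$ (identically one when $\alpha=1$) handles the remainder. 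Once the conditional DFR property is in hand, it extends to the unconditional $X_{1:N}$ via the classical closure of the DFR class under mixtures with respect to the random size $N$.

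Finally, feeding the conclusions of the two steps into Theorem 3.B.20 of \cite{shaked2007stochastic} yields the desired dispersive ordering. The principal obstacle lies in the second step: the auxiliary factor $\frac{(1-G)G^{\alpha-1}}{1-G^{\alpha}}$ is not transparently monotone in $x$ for arbitrary values of $\alpha$, so one must either confine attention to a subrange of $\alpha$ or extract extra cancellations from the explicit form; and propagating the DFR property through the mixture over $N$ requires a separate appeal to the preservation-under-mixture theorem rather than a direct differentiation of the mixed survival function.
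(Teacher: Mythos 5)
Your proposal matches the paper's route exactly: the paper proves Theorem \ref{th16} with the one-line remark that it follows ``by the same argument as Theorem \ref{th13dis1}'', i.e., by combining the hazard rate order supplied by Theorem \ref{th6} with the DFR property coming from the hypothesis that $r_{g}$ is decreasing, and then invoking Theorem 3.B.20 of \cite{shaked2007stochastic}. The obstacles you flag in the second step (the monotonicity of the factor $\frac{(1-G)G^{\alpha-1}}{1-G^{\alpha}}$ for general $\alpha$, and the need for a preservation-under-mixture argument to pass from $X_{1:n}$ to $X_{1:N}$) are genuine, but the paper's own proof is silent on both points, so your attempt is no less complete than the original.
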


We complete this Section with the following theorems related to likelihood ratio order. 
\begin{theorem} \label{lr1}
	Suppose Assumption \ref{ass1} hold with $\bm{\gamma}=\bm{\delta}=\gamma\bm{1}_{n}.$ Also, for $i=1,\cdots,p,$ let $\alpha_{i}=\alpha_1,$ $\beta_i=\beta_1$ and for $i={p+1},\cdots,n,$ $\alpha_{i}=\alpha_2,$ $\beta_i=\beta_2$ satisfying {$\alpha_{1}\leq\alpha_{2}\leq \beta_{2}\leq \beta_{1}.$} Then, $(\alpha_{1}\bm{1}_{p},\alpha_{2}\bm{1}{q})\succeq^{m}(\beta_{1}\bm{1}_{p},\beta_{2}\bm{1}{q})\Rightarrow X_{1:N}\geq_{lr}Y_{1:N}. $ 
\end{theorem}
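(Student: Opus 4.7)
The plan follows the two-step pattern used in Theorems \ref{th6} and \ref{th12}: first establish the likelihood ratio order between the fixed-size minima, and then lift it through the random sample size $N$. In this part of the section the copula is the independence one, $\psi(x)=e^{-x}$, and $\bm{\gamma}=\bm{\delta}=\gamma\bm{1}_n$, so the block structure yields
\begin{equation*}
\bar F_{X_{1:n}}(x) = (1-G^{\alpha_1}(x))^{p\gamma}(1-G^{\alpha_2}(x))^{q\gamma},\qquad q=n-p,
\end{equation*}
with the corresponding density obtained by differentiation, and analogous formulas hold for $Y_{1:n}$ with $(\alpha_1,\alpha_2)$ replaced by $(\beta_1,\beta_2)$.

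Step 1 (fixed $n$): I would aim to show that $f_{X_{1:n}}(x)/f_{Y_{1:n}}(x)$ is non-decreasing in $x$. Observing that under the block structure $X_{1:n}$ is distributionally the minimum of two independent Kw-G variables with parameter pairs $(\alpha_1,p\gamma)$ and $(\alpha_2,q\gamma)$ (and analogously for $Y_{1:n}$), the problem collapses to a two-factor comparison. Differentiating $\ln(f_{X_{1:n}}/f_{Y_{1:n}})$ in $x$ reduces the verification to a sign condition on an auxiliary expression built from the Kw-G hazards and reversed hazards; the chain $\alpha_1\le\alpha_2\le\beta_2\le\beta_1$ combined with the block majorization is exactly what is needed to run a Schur-type sign analysis, which I would carry out by combining Lemma 3.3 of \cite{kundu2016some} with the Schur-monotonicity arguments of Theorem 3.4 of \cite{kundu2018w}.

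Step 2 (lifting to random $N$): With $X_{1:n}\ge_{lr}Y_{1:n}$ in hand for every $n$ in the support of $N$, I would write
\begin{equation*}
\frac{f_{X_{1:N}}(x)}{f_{Y_{1:N}}(x)} = \frac{\sum_n f_{X_{1:n}}(x)\,p(n)}{\sum_n f_{Y_{1:n}}(x)\,p(n)},
\end{equation*}
and invoke a basic composition (TP$_2$) preservation argument in the spirit of the transfer Theorems \ref{ch24_3.1}, \ref{ch24_3.2} and \ref{ch24_3.3} imported from \cite{chowdhury2024}. The needed ingredients are the fixed-$n$ order from Step 1 together with the elementary fact that within each of the families $\{X_{1:n}\}$ and $\{Y_{1:n}\}$ the minimum is likelihood-ratio-decreasing in $n$ (appending an extra independent Kw-G factor shrinks the minimum in the lr sense). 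Together these yield the desired monotonicity of the ratio in $x$.

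The principal obstacle is Step 1. The ratio factors into a product of powers $(1-G^{\alpha_i})^{p\gamma}/(1-G^{\beta_i})^{p\gamma}$ times a rational bracket in the Kw-G hazards, and the interplay between these two pieces under the block majorization, exploiting the chain $\alpha_1\le\alpha_2\le\beta_2\le\beta_1$ at each algebraic step, is where the delicate sign work resides; I expect this to absorb the bulk of the proof, while Step 2 is a relatively routine application of the composition lemma from \cite{chowdhury2024}.
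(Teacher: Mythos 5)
Your overall architecture matches the paper's: establish the fixed-$n$ comparison and then transfer it to the random sample size via the machinery of \cite{chowdhury2024} (the paper uses Theorem $3.5$ of that reference). However, there is a genuine gap in your Step 2. You assert that the fixed-$n$ order $X_{1:n}\geq_{lr}Y_{1:n}$ together with the lr-monotonicity of each family in $n$ ``yield the desired monotonicity of the ratio in $x$.'' They do not: a likelihood ratio order between corresponding members of two families is not preserved under mixing over $n$ with a common weight $p(n)$ unless a cross-condition is also imposed. Writing the two-point case, one needs all four cross-ratios $f_{X_{1:m}}/f_{Y_{1:n}}$ to be monotone in $x$ in the same direction, and your two hypotheses only control the ``diagonal'' ratios and the within-family ratios; the off-diagonal term $f_{X_{1:n+1}}/f_{Y_{1:n}}$ is a product of an increasing and a decreasing factor and is not controlled. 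The transfer theorem used in the paper accordingly requires a third condition, namely that $f_{X_{1:n}}(x)/f_{Y_{1:n}}(x)$ be monotone in $n$, and verifying it is a substantive step: it is precisely where the chain $\alpha_{1}\leq\alpha_{2}\leq\beta_{2}\leq\beta_{1}$ is used (reducing to the pointwise inequality $\frac{p\alpha_{1}G^{\alpha_{1}}}{1-G^{\alpha_{1}}}+\frac{q\alpha_{2}G^{\alpha_{2}}}{1-G^{\alpha_{2}}}\geq \frac{p\beta_{1}G^{\beta_{1}}}{1-G^{\beta_{1}}}+\frac{(q+1)\beta_{2}G^{\beta_{2}}}{1-G^{\beta_{2}}}$). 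Your proposal never verifies, or even states, this condition.

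A secondary weak point: you call it an ``elementary fact'' that appending an independent Kw-G factor shrinks the minimum in the lr sense. Taking a minimum with an independent variable yields the hazard rate order for free, but \emph{not} the likelihood ratio order; the paper has to invoke the second part of Theorem $1.C.31$ of \cite{shaked2007stochastic} together with $\alpha_{1}\leq\alpha_{2}$ to get $X_{1:n}\geq_{lr}X_{1:n+1}$. Your Step 1 (the fixed-$n$ order via a Schur-type sign analysis of $\ln(f_{X_{1:n}}/f_{Y_{1:n}})$ under the block majorization) is consistent with the paper, which obtains it by following Theorem $5.3$ of \cite{Das2021peis}, so that part of the plan is sound, if only sketched.
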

\begin{proof}
    {Under the given assumption, the density functions of $X_{1:n}$ and $Y_{1:n}$ are, respectively, given by 
    $f_{X_{1:n}}(x)=\sum_{i=1}^{n}\frac{\gamma\alpha_{i}r_{g}(x)G^{\alpha_{i}}(x)}{1-G^{\alpha_{i}}(x)}\overset{sign}{=}\frac{p\alpha_{1}G^{\alpha_{1}}(x)}{1-G^{\alpha_{1}}(x)}+\frac{q\alpha_{2}G^{\alpha_{2}}(x)}{1-G^{\alpha_{2}}(x)}$ and $f_{Y_{1:n}}(x)=\sum_{i=1}^{n}\frac{\gamma\beta_{i}r_{g}(x)G^{\beta_{i}}(x)}{1-G^{\beta_{i}}(x)}\overset{sign}{=}\frac{p\beta_{1}G^{\beta_{1}}(x)}{1-G^{\beta_{1}}(x)}+\frac{q\beta_{2}G^{\beta_{2}}(x)}{1-G^{\beta_{2}}(x)},$ where $n=p+q.$ Applying Theorem $3.5$ of \cite{chowdhury2024}, we need to establish the following three conditions:
    \begin{itemize}
        \item[(i)] $\frac{f_{X_{1:n}}(x)}{f_{Y_{1:n}}(x)}$ is increasing in $n;$
        \item[(ii)] $n_{1}\leq n_{2}$ implies $X_{1:{n_1}}\geq_{lr} X_{1:n_{2}};$
        \item[(iii)] $ X_{1:{n}}\geq_{lr} Y_{1:n}.$
    \end{itemize}
    To prove Part $(i)$, we have to show that
    $A(n+1)=\frac{f_{X_{1:{n+1}}}(x)}{f_{Y_{1:{n+1}}}(x)}\geq \frac{f_{X_{1:n}}(x)}{f_{Y_{1:n}}(x)}=A(n),$ which is equivalent to showing that $\frac{p\alpha_{1}G^{\alpha_{1}}(x)}{1-G^{\alpha_{1}}(x)}+\frac{q\alpha_{2}G^{\alpha_{2}}(x)}{1-G^{\alpha_{2}}(x)}\geq \frac{p\beta_{1}G^{\beta_{1}}(x)}{1-G^{\beta_{1}}(x)}+\frac{(q+1)\beta_{2}G^{\beta_{2}}(x)}{1-G^{\beta_{2}}(x)}.$ Using the given condition $\alpha_{1}\leq\alpha_{2}\leq \beta_{2}\leq \beta_{1}$, we can easily see that the above inequality holds. We can complete Part $(ii)$ by using the second part of Theorem $1.C.31$ of \cite{shaked2007stochastic} and the given assumption $\alpha_{1}\leq\alpha_{2}.$ Finally, by following the steps of the proof of Theorem $5.3 $ of \cite{Das2021peis}, we can prove Part $(iii)$ under the given condition $(\alpha_{1}\bm{1}_{p},\alpha_{2}\bm{1}{q})\succeq^{m}(\beta_{1}\bm{1}_{p},\beta_{2}\bm{1}{q})$. Hence, the theorem.}
\end{proof}
 {Next, we develop another new sufficient conditions on the parameters such that the likelihood ratio order exists between two random minima $X_{1:{N}}$ and $Y_{1:{N}}$. To prove the results, we need the following lemma and theorem.}
\begin{lemma}\label{lem2}
	Suppose $d_1$, $d$ and $d_2$ are three nonnegative functions. Then, $\frac{d_1(x) +d(x)}{d_2(x) +d(x)}$ is decreasing in $x$ if $\frac{d_1}{d_2},$ $\frac{d}{d_2}$ and $\frac{d_1}{d}$ are decreasing in $x$. 
\end{lemma}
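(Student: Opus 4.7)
The plan is to verify the monotonicity directly from the definition of ``decreasing'' by cross-multiplying at two points $x_1<x_2$ and splitting the resulting difference into three nonnegative pieces, one produced by each of the three hypotheses.

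First I would fix arbitrary $x_1<x_2$ in the common domain, assuming $d_2(x)+d(x)>0$ so that the ratio is well defined (and $d_2,d>0$ wherever needed so that the three stated ratio monotonicities make sense). The target inequality
\[
\frac{d_1(x_1)+d(x_1)}{d_2(x_1)+d(x_1)}\ \geq\ \frac{d_1(x_2)+d(x_2)}{d_2(x_2)+d(x_2)}
\]
is equivalent, after clearing denominators, to
\[
\bigl(d_1(x_1)+d(x_1)\bigr)\bigl(d_2(x_2)+d(x_2)\bigr)\ \geq\ \bigl(d_1(x_2)+d(x_2)\bigr)\bigl(d_2(x_1)+d(x_1)\bigr).
\]
Expanding both products, the cross term $d(x_1)d(x_2)$ appears on each side and cancels, leaving six terms.

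Next I would regroup those six terms into three determinant-style differences:
\[
[d_1(x_1)d_2(x_2)-d_1(x_2)d_2(x_1)]+[d_1(x_1)d(x_2)-d_1(x_2)d(x_1)]+[d(x_1)d_2(x_2)-d(x_2)d_2(x_1)].
\]
Each bracket is nonnegative by exactly one of the three hypotheses: the first because $d_1/d_2$ decreasing gives $d_1(x_1)/d_2(x_1)\ge d_1(x_2)/d_2(x_2)$, which clears to $d_1(x_1)d_2(x_2)\ge d_1(x_2)d_2(x_1)$; the second by the same argument applied to $d_1/d$; and the third by the same argument applied to $d/d_2$. Summing the three nonnegative brackets yields the cross-multiplied inequality and hence the claim.

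There is essentially no analytic obstacle here---the content of the lemma is really an algebraic identity whose key feature is that the $d(x_1)d(x_2)$ terms cancel, leaving precisely three pairs that match the three hypotheses one-for-one. The only mild issue is bookkeeping on positivity of $d$ and $d_2$, so that all three ratios are well defined on the range of interest and cross-multiplication preserves the direction of the inequality; none of this requires differentiability of the functions involved.
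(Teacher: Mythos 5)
Your proof is correct. Note that the paper states this lemma without supplying any proof (it is merely asserted and then invoked in the proof of Theorem 3.12), so there is no argument of the authors' to compare against; your two-point cross-multiplication gives a complete and fully elementary verification. The decomposition is right: after clearing the (positive) denominators the $d(x_1)d(x_2)$ terms cancel and the remaining six terms group into exactly the three $2\times 2$ determinant-type differences, each nonnegative by one of the three monotonicity hypotheses. Your approach also has the merit of not assuming differentiability, which a derivative-based sign computation would require. One small remark you could add: for strictly positive functions the hypothesis that $d_1/d_2$ is decreasing is actually redundant, since $d_1/d_2=(d_1/d)\cdot(d/d_2)$ is a product of two nonnegative decreasing functions; your proof uses all three hypotheses as stated, which is harmless but slightly more than is needed.
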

\begin{theorem}\label{cor_lrth1}
	Suppose Assumption \ref{ass1} hold with $\psi_{1}=\psi_{2}=\psi,$ $F=G$ and $\bm{\alpha}=\bm{\beta}=\boldsymbol{1}_n.$ Also, for $i=1,\cdots,p,$ let $\gamma_{i}=\gamma_1,$ $\delta_i=\delta_1$ and $i={p+1},\cdots,n,$ $\gamma_{i}=\gamma,$ $\delta_i=\gamma.$
	If $\frac{\psi''}{\psi'}$ is decreasing, $\frac{\psi\ln\psi}{\psi'}$ is increasing and concave, then $\gamma_1\geq \gamma\geq \delta_1\Rightarrow X_{1:n}\leq_{lr}Y_{1:n}. $ Here, $p\geq 1$ and $p+q=n\geq 2.$
\end{theorem}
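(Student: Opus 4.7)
The plan is to write the likelihood ratio $f_{X_{1:n}}(x)/f_{Y_{1:n}}(x)$ in a product form via the substitution $t_\mu(x):=\phi(\bar G^\mu(x))$, and then to prove each factor is decreasing in $x$. Under the hypotheses $\alpha_i=1$ and common baseline, one has $\bar F_{X_{1:n}}(x)=\psi(w_X(x))$ and $\bar F_{Y_{1:n}}(x)=\psi(w_Y(x))$ with $w_X=p\,t_{\gamma_1}+q\,t_\gamma$ and $w_Y=p\,t_{\delta_1}+q\,t_\gamma$. Differentiating the defining relation $\psi(t_\mu(x))=\bar G^\mu(x)$ gives first $t_\mu'(x)=-\mu\,r_g(x)\,L(t_\mu(x))$, where $L:=\psi/\psi'$, and then---using $\mu=\ln\psi(t_\mu)/\ln\bar G(x)$---the key identity $\mu L(t_\mu)=P(t_\mu)/\ln\bar G(x)$, where $P:=\psi\ln\psi/\psi'$ is exactly the function appearing in the hypothesis. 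Plugging back into $f_{X_{1:n}}=-\psi'(w_X)w_X'$ and its counterpart, the common factor $r_g(x)/\ln\bar G(x)$ in $w_X'$ and $w_Y'$ cancels in the ratio, giving
\[
\frac{f_{X_{1:n}}(x)}{f_{Y_{1:n}}(x)} \;=\; \underbrace{\frac{\psi'(w_X(x))}{\psi'(w_Y(x))}}_{R_1(x)} \cdot \underbrace{\frac{p\,P(t_{\gamma_1}(x)) + q\,P(t_\gamma(x))}{p\,P(t_{\delta_1}(x)) + q\,P(t_\gamma(x))}}_{R_2(x)},
\]
so it suffices to show $R_1$ and $R_2$ are each decreasing in $x$.

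For $R_1$, observe that $t_\mu(x)$ is strictly increasing in both $x$ and $\mu$, so $\gamma_1\geq \delta_1$ yields $w_X\geq w_Y$; because $P$ is increasing (first part of the second hypothesis), one also has $w_X'(x)\geq w_Y'(x)\geq 0$. Setting $-a=(\psi''/\psi')(w_X)$ and $-b=(\psi''/\psi')(w_Y)$---both nonnegative by convexity of the Archimedean generator $\psi$---the hypothesis that $\psi''/\psi'$ is decreasing forces $a\geq b\geq 0$. Computing $\frac{d}{dx}\log R_1 = bd - ac$ with $c=w_X'\geq d=w_Y'\geq 0$ then gives $\frac{d}{dx}\log R_1\leq 0$, so $R_1$ is decreasing.

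For $R_2$, I apply Lemma \ref{lem2} with $d_1=p\,P(t_{\gamma_1})$, $d=q\,P(t_\gamma)$, $d_2=p\,P(t_{\delta_1})$. Since $\gamma_1\geq\gamma\geq\delta_1$, verification of the three hypotheses of Lemma \ref{lem2} reduces to a single statement: $P(t_\mu(x))/P(t_\nu(x))$ is decreasing in $x$ whenever $\mu\geq\nu$. Taking the logarithmic derivative and substituting $t_\mu'(x)=-r_g(x)P(t_\mu(x))/\ln\bar G(x)$ from the key identity cancels the $P(t_\mu)$ and $P(t_\nu)$ factors, and the required inequality reduces to $P'(t_\mu)\leq P'(t_\nu)$. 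Since $t_\mu\geq t_\nu$, this is precisely the concavity of $P$ assumed in the hypothesis.

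The main obstacle, and the technical heart of the proof, is the identity $\mu L(t_\mu)=P(t_\mu)/\ln\bar G(x)$: it converts the parameter $\mu$ appearing explicitly inside $t_\mu'(x)$ into a factor of $P(t_\mu)$, producing the clean cancellation that makes the two hypotheses on the generator precisely the ones needed. The remaining work is sign-tracking ($\psi'<0$, $\psi''\geq 0$, $\ln\bar G<0$, $L<0$), which is routine but must be done with care throughout.
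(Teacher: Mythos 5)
Your proof is correct and follows essentially the same route as the paper: the same factorization of the likelihood ratio into $\kappa_1=\psi'(w_X)/\psi'(w_Y)$ times the ratio of the $P$-sums, the same key identity rewriting $\mu\,\psi/\psi'$ evaluated at $t_\mu$ as $P(t_\mu)/\ln\bar G$, the same monotonicity argument for the first factor, and the same appeal to Lemma \ref{lem2} for the second. If anything, your treatment of $R_2$ is slightly cleaner: by exploiting $t_\mu'\propto P(t_\mu)$ you reduce the three ratio conditions to plain concavity of $P$, which is exactly the stated hypothesis, whereas the paper's write-up loosely invokes ``log-concavity'' at that step.
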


\begin{proof}
	To obtain the required result, we need to establish
	$$\frac{{f}_{X_{1:n}}(p,q)}{{f}_{Y_{1:n}}(p,q)}=\kappa_1(x)\kappa_2(x)$$ is decreasing in $x,$ where
	
	$${f}_{X_{1:n}}(p,q)=\psi'(p\phi(g^{\gamma_1}(x))+q\phi(g^{\gamma}(x)))\times[p\gamma_1g'(x)g^{\gamma_1 -1}(x)\phi'(g^{\gamma_1}(x))+q\gamma g'(x)g^{\gamma -1}(x)\phi'(g^{\gamma}(x))],$$
	$${f}_{Y_{1:n}}(p,q)=\psi'(p\phi(g^{\delta_1}(x))+q\phi(g^{\gamma}(x)))\times[p\delta_1g'(x)g^{\delta_1 -1}(x)\phi'(g^{\delta_1}(x))+q\gamma g'(x)g^{\gamma -1}(x)\phi'(g^{\gamma}(x))],$$
	\begin{equation*}\label{lr10}
		\kappa_1(x)=\frac{\psi'(p\phi(g^{\gamma_1}(x))+q\phi(g^{\gamma}(x)))}{\psi'(p\phi(g^{\delta_1}(x))+q\phi(g^{\gamma}(x)))}\text{ and } \kappa_2(x)=\frac{[p\gamma_1g'(x)g^{\gamma_1 -1}(x)\phi'(g^{\gamma_1}(x))+q\gamma g'(x)g^{\gamma -1}(x)\phi'(g^{\gamma}(x))]}{[p\delta_1g'(x)g^{\delta_1 -1}(x)\phi'(g^{\delta_1}(x))+q\gamma g'(x)g^{\gamma -1}(x)\phi'(g^{\gamma}(x))]}.
	\end{equation*}
	
	As $\kappa_1(x)$ and $\kappa_2(x)$ are both positive-valued,  we only need to verify that the functions are decreasing in $x.$ Therefore, to prove $\kappa'_1(x)\leq 0,$ let us suppose $s_1=p\phi(g^{\gamma_1}(x))+q\phi(g^{\gamma}(x))$ and $s_2=p\phi(g^{\delta_1}(x))+q\phi(g^{\gamma}(x)).$
	Then,
	\begin{align}
		\kappa'_1{(x)}&\overset{sign}{=} s^{'}_{1}\frac{\psi^{''}(s_1)}{\psi^{'}(s_1)}-s^{'}_{2}\frac{\psi^{''}(s_2)}{\psi^{'}(s_2)}\nonumber\\
		&=\left[p\left[\frac{\psi(t)\ln\psi(t)}{\psi'(t)}\right]_{t=\phi(g^{\gamma_1}(x))}+q\left[\frac{\psi(t)\ln\psi(t)}{\psi'(t)}\right]_{t=\phi(g^{\gamma}(x))}\right]\times \frac{\psi^{''}(s_1)}{\psi^{'}(s_1)}\nonumber\\
		&-\left[p\left[\frac{\psi(t)\ln\psi(t)}{\psi'(t)}\right]_{t=\phi(g^{\delta_1}(x))}+q\left[\frac{\psi(t)\ln\psi(t)}{\psi'(t)}\right]_{t=\phi(g^{\gamma}(x))}\right]\times \frac{\psi^{''}(s_2)}{\psi^{'}(s_2)}.		
	\end{align}
	Now, $\gamma_1\geq \gamma\geq \delta_1$ implies $s_1\geq s_2$ and $\phi(g^{\gamma_{1}}(x))\geq \phi(g^{\delta_1}).$
	By using the given assumptions, we can then easily check that $\kappa_1(x)$ is decreasing in $x.$ Again, $\kappa_{2}(x)$ can be written in the form
	\begin{align}
		\kappa_{2}(x)\overset{sign}{=}\frac{\frac{p\psi(u)\ln\psi(u)}{\psi'(u)}|_{u=\phi(g^{\gamma_1}(x))}+\frac{q\psi(u)\ln\psi(u)}{\psi'(u)}|_{u=\phi(g^{\gamma}(x))}}{\frac{p\psi(u)\ln\psi(u)}{\psi'(u)}|_{u=\phi(g^{\delta_1}(x))}+\frac{q\psi(u)\ln\psi(u)}{\psi'(u)}|_{u=\phi(g^{\gamma}(x))}}.
	\end{align}
	Denote $h_1(x)=\frac{\frac{\psi(u)\ln\psi(u)}{\psi'(u)}|_{u=\phi(g^{\gamma_1})}}{\frac{\psi(u)\ln\psi(u)}{\psi'(u)}|_{u=\phi(g^{\delta_1})}}$, $h_2(x)=\frac{\frac{\psi(u)\ln\psi(u)}{\psi'(u)}|_{u=\phi(g^{\gamma_1})}}{\frac{\psi(u)\ln\psi(u)}{\psi'(u)}|_{u=\phi(g^{\gamma})}}$ and $h_3(x)=\frac{\frac{\psi(u)\ln\psi(u)}{\psi'(u)}|_{u=\phi(g^{\gamma})}}{\frac{\psi(u)\ln\psi(u)}{\psi'(u)}|_{u=\phi(g^{\delta_1})}}.$
	Now, using the log-concavity property of $\frac{\psi\ln\psi}{\psi'},$ it is easy to observe that $h_{1}^{'}(x)\leq 0.$ Similarly, we can show that both $h_{2}(x)$ and $h_{3}(x)$  are decreasing in $x$. Finally Lemma \ref{lem2} can be used to complete the proof of the theorem.
\end{proof}

{\begin{theorem}\label{lrth1}
	Suppose Assumption \ref{ass1} hold with $F=G$ and $\bm{\alpha}=\bm{\beta}=\boldsymbol{1}_n.$ Also, for $i=1,\cdots,p,$ let $\gamma_{i}=\gamma_1,$ $\delta_i=\delta_1$ and $i={p+1},\cdots,n,$ $\gamma_{i}=\gamma,$ $\delta_i=\gamma.$ Then $\gamma_1\geq \gamma\geq \delta_1\Rightarrow X_{1:N}\leq_{lr}Y_{1:N}. $ Here, $p\geq 1$ and $p+q=n\geq 2.$
\end{theorem}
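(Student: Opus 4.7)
The approach is to reduce the random-sample-size statement to the fixed-$n$ result of Theorem \ref{cor_lrth1} by invoking the lifting mechanism of Theorem $3.5$ of \cite{chowdhury2024}, exactly as was done for Theorem \ref{lr1}. The key simplification is that throughout this section we have specialized to $\psi(x) = e^{-x}$, which makes the Archimedean copula degenerate to independence. Combined with $\bm\alpha = \bm\beta = \bm 1_{n}$, every $X_{i}$ becomes proportional-hazards with survival $(1-G(x))^{\gamma_{i}}$, and similarly for $Y_{i}$, so $X_{1:n}$ and $Y_{1:n}$ inherit closed forms: with $q = n - p$,
\begin{equation*}
\bar F_{X_{1:n}}(x) \;=\; (1-G(x))^{p\gamma_{1} + q\gamma}, \qquad \bar F_{Y_{1:n}}(x) \;=\; (1-G(x))^{p\delta_{1} + q\gamma}.
\end{equation*}
That is, $X_{1:n}$ and $Y_{1:n}$ are themselves $\text{Kw-G}(x,1,\cdot;G)$, and the subsequent analysis reduces to elementary ratios of powers of $1-G(x)$.

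To conclude $X_{1:N} \leq_{lr} Y_{1:N}$, I would verify the three hypotheses of the $X$--$Y$ swapped version of Theorem $3.5$ of \cite{chowdhury2024}: (i) $f_{X_{1:n}}(x)/f_{Y_{1:n}}(x)$ is decreasing in $n$; (ii) $n_{1} \leq n_{2}$ implies $Y_{1:n_{1}} \geq_{lr} Y_{1:n_{2}}$; and (iii) $X_{1:n} \leq_{lr} Y_{1:n}$ for each fixed $n\geq 2$. The explicit ratio
\begin{equation*}
\frac{f_{X_{1:n}}(x)}{f_{Y_{1:n}}(x)} \;=\; \frac{p\gamma_{1} + q\gamma}{p\delta_{1} + q\gamma}\,(1-G(x))^{p(\gamma_{1}-\delta_{1})}
\end{equation*}
handles all three conditions almost immediately: its decrease in $x$ (which is (iii)) follows from $\gamma_{1} \geq \delta_{1}$ together with the monotonicity of $1-G$; the prefactor is checked to be decreasing in $q$ (hence in $n$), while the $x$-dependent power is independent of $n$, giving (i); and (ii) is the elementary fact that within the proportional-hazards family $\text{Kw-G}(x,1,\theta;G)$, likelihood ratio order decreases as $\theta$ grows, applied with the second parameter $p\delta_{1} + q\gamma$ increasing in $n$.

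An alternative path to condition (iii) is to invoke Theorem \ref{cor_lrth1} directly, since under $\psi(x) = e^{-x}$ one has $\psi''/\psi' \equiv -1$ (constant, hence decreasing in the non-strict sense used throughout the paper) and $(\psi\ln\psi)/\psi'(x) = x$ (increasing and concave), so the generator hypotheses of that theorem are met automatically. I expect the only genuine obstacle to be notational rather than analytic: one must track the correct direction of $\leq_{lr}$ in conditions (i)--(iii) when applying the swapped form of the lifting theorem, and handle the boundary case $n = p$ (i.e.\ $q = 0$) where the $X$--$Y$ comparison reduces to the straight inequality $\gamma_{1} \geq \delta_{1}$ alone.
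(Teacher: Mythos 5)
Your proposal is correct and follows essentially the same route as the paper, which (very tersely) combines the lifting mechanism of Theorem 3.5 of \cite{chowdhury2024} used in Theorem \ref{lr1} with the fixed-$n$ comparison idea of Theorem \ref{cor_lrth1}; your explicit reduction under $\psi(x)=e^{-x}$ and $\bm{\alpha}=\bm{\beta}=\bm{1}_n$ to powers of $1-G(x)$ simply makes the three verifications transparent. The points you flag (direction of the swapped conditions, the $n\le p$ boundary) are the only care needed, and you handle them correctly.
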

\begin{proof}
	By applying Theorem \ref{lr1}, we can easily obtain the required result using the same idea as in Theorem \ref{cor_lrth1}. Hence, the details are omitted.
\end{proof}

The proof of the following theorem is similar to the last theorem, and so it is not presented.
\begin{theorem}\label{lrth2}
	Suppose Assumption \ref{ass1} {holds} with $\bm\gamma=\bm\delta=\bm{1}_n.$ Also, for $i=1,\cdots,p,$ let $\alpha_{i}=\alpha_1,$ $\beta_i=\beta_1$ and $i={p+1},\cdots,n,$ $\alpha_{i}=\alpha,$ $\beta_i=\alpha.$ Then, $\alpha_1\geq \alpha\geq \beta_1\Rightarrow X_{1:N}\leq_{lr}Y_{1:N}. $ Here, $p\geq 1$ and $p+q=n\geq 2.$
\end{theorem}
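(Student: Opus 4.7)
The plan is to mirror the proof of Theorem \ref{lrth1}: reduce to the fixed-$n$ problem via Theorem 3.5 of \cite{chowdhury2024} (as was done inside the proof of Theorem \ref{lr1}), and then establish the fixed-$n$ likelihood ratio comparison by the factorization technique used in Theorem \ref{cor_lrth1}, now with the roles of the two Kumaraswamy shape parameters interchanged. Since this part of the section fixes $\psi(x)=e^{-x}$ and we take $\bm\gamma=\bm\delta=\bm 1_n$, each marginal collapses to the proportional reversed hazard rate form $F_{X_i}(x)=G^{\alpha_i}(x)$ and the coupling is the independence copula, which makes all densities and survivals fully explicit.

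First I would record the closed form
\begin{equation*}
f_{X_{1:n}}(x)=g(x)(1-G^{\alpha_1}(x))^{p-1}(1-G^{\alpha}(x))^{q-1}\bigl[p\alpha_1 G^{\alpha_1-1}(x)(1-G^{\alpha}(x))+q\alpha G^{\alpha-1}(x)(1-G^{\alpha_1}(x))\bigr],
\end{equation*}
and the analogous expression for $f_{Y_{1:n}}(x)$ obtained by replacing $\alpha_1$ by $\beta_1$ throughout. Applying Theorem 3.5 of \cite{chowdhury2024} in the same way as in the proof of Theorem \ref{lr1}, the conclusion $X_{1:N}\leq_{lr}Y_{1:N}$ reduces to three checks: monotonicity of $f_{X_{1:n}}(x)/f_{Y_{1:n}}(x)$ in $n$; the appropriate lr-monotonicity of the minimum in the sample size; and the fixed-$n$ comparison $X_{1:n}\leq_{lr}Y_{1:n}$. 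The first two are routine from the explicit forms above together with the hypothesis $\alpha_1\geq\beta_1$.

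The substance lies in the fixed-$n$ step, which I would carry out via the factorization
\begin{equation*}
\frac{f_{X_{1:n}}(x)}{f_{Y_{1:n}}(x)}=\kappa_1(x)\,\kappa_2(x),\qquad \kappa_1(x)=\left(\frac{1-G^{\alpha_1}(x)}{1-G^{\beta_1}(x)}\right)^{p-1},
\end{equation*}
with $\kappa_2(x)$ equal to the ratio of the two bracketed sums in the density expressions. Monotonicity of $\kappa_1$ in $x$ is a direct calculation using $\alpha_1\geq\beta_1$. For $\kappa_2$ I would write it as $(d_1(x)+d(x))/(d_2(x)+d(x))$, where $d_1$ is the $\alpha_1$-summand, $d_2$ the $\beta_1$-summand and $d$ the common $\alpha$-summand, and then appeal to Lemma \ref{lem2}, which requires verifying that the three auxiliary ratios $d_1/d_2$, $d_1/d$ and $d/d_2$ are monotone in $x$.

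The principal obstacle is exactly this bookkeeping inside $\kappa_2$: one must identify the three $d$-terms so that each pairwise ratio inherits its monotonicity from the correct pair in the sandwich $\alpha_1\geq\alpha\geq\beta_1$, namely $\alpha_1\geq\beta_1$ drives $d_1/d_2$, $\alpha_1\geq\alpha$ drives $d_1/d$, and $\alpha\geq\beta_1$ drives $d/d_2$. Each of these pairwise monotonicity claims reduces to the same Lehmann-family comparisons already used in Theorems \ref{cor_lrth1} and \ref{lrth1}. Once these are assembled, Lemma \ref{lem2} yields the monotonicity of $\kappa_2$, the product $\kappa_1\kappa_2$ gives the fixed-$n$ lr order, and the Chowdhury lifting finishes the proof.
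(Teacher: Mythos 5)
Your high-level route is exactly the one the paper gestures at (the paper itself presents no proof, saying only that it is ``similar to the last theorem''), but two of your steps fail as described. First, the factorization: after extracting $\kappa_1(x)=\bigl(\frac{1-G^{\alpha_1}(x)}{1-G^{\beta_1}(x)}\bigr)^{p-1}$, the two bracketed sums are $p\alpha_1G^{\alpha_1-1}(1-G^{\alpha})+q\alpha G^{\alpha-1}(1-G^{\alpha_1})$ and $p\beta_1G^{\beta_1-1}(1-G^{\alpha})+q\alpha G^{\alpha-1}(1-G^{\beta_1})$; their ``$\alpha$-summands'' are not equal (one carries the factor $1-G^{\alpha_1}$, the other $1-G^{\beta_1}$), so there is no common $d$ and Lemma \ref{lem2} does not apply to your $\kappa_2$. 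To get a genuinely common summand you must pull out the full survival ratio $\kappa_1=\bigl(\frac{1-G^{\alpha_1}}{1-G^{\beta_1}}\bigr)^{p}$ and let $\kappa_2$ be the ratio of the hazard rates $\frac{p\alpha_1 G^{\alpha_1-1}}{1-G^{\alpha_1}}+\frac{q\alpha G^{\alpha-1}}{1-G^{\alpha}}$ and $\frac{p\beta_1 G^{\beta_1-1}}{1-G^{\beta_1}}+\frac{q\alpha G^{\alpha-1}}{1-G^{\alpha}}$.

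Second, and more seriously, the monotonicities you dismiss as ``routine'' and as ``the same Lehmann-family comparisons'' point the wrong way. With $\bm{\gamma}=\bm{\delta}=\bm{1}_n$ the marginals are $F_{X_i}=G^{\alpha_i}$, a proportional \emph{reversed} hazards family, not the proportional-hazards family $\bar G^{\gamma}$ underlying Theorem \ref{lrth1}; here $\alpha_1\ge\beta_1$ makes the $X_i$ stochastically \emph{larger}, so $X_{1:n}\ge_{st}Y_{1:n}$ for every $n$ and hence $X_{1:N}\ge_{st}Y_{1:N}$, which already contradicts the asserted $X_{1:N}\le_{lr}Y_{1:N}$. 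Concretely, writing $u=G(x)$, the ratio $d_1/d_2=\frac{\alpha_1}{\beta_1}u^{\alpha_1-\beta_1}\frac{1-u^{\beta_1}}{1-u^{\alpha_1}}$ runs from $0$ at $u=0$ to $1$ at $u=1$ when $\alpha_1>\beta_1$, so it cannot be decreasing, and $\kappa_1$ is likewise increasing; Lemma \ref{lem2} (in its mirror form) then yields $f_{X_{1:n}}/f_{Y_{1:n}}$ increasing, i.e.\ $X_{1:n}\ge_{lr}Y_{1:n}$. A direct check with $G(x)=x$, $p=q=1$, $\alpha_1=3$, $\alpha=2$, $\beta_1=1$ gives $f_{X_{1:2}}/f_{Y_{1:2}}$ increasing from $0$ to $3$. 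So your argument, carried out honestly, proves $X_{1:N}\ge_{lr}Y_{1:N}$ and cannot deliver the stated $\le_{lr}$; the sign flip caused by interchanging the roles of the two shape parameters is precisely the point your proposal glosses over, and it indicates that the inequality in the theorem as printed is reversed relative to what the Theorem \ref{lrth1} template actually produces.
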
}
Note that the sufficient conditions provided in the above theorem is distribution-free and so the presented result is applicable to all life distributions.
{\begin{remark} 
    It should be noted that the results for hazard rate, reversed hazard rate and likelihood ratio orders are based on independent observations; these results remain open problems in the case of dependent observations.
\end{remark}}
{
\section{Applications}\label{app}
 Ordering results between two random extremes is useful in different problems. In the following, we describe some applications of our results in biostatistics and transportation theory for illustrational purpose.

\subsection{Biostatistics} 
    Consider the case of a cancer patient treatment. Let $N$ be the number of carcinogenic cells that are left active after the first shot of the treatment is given. Let $X_i$, $i=1,\ldots,N,$ be the incubation time for the $i$th clonogenic cell. Suppose the expiry of $1$-out-of-$N$ and $N$-out-of-$N,$ $N\geq1,$ latent factors of the system need to be activated. Then, the failure time for the patient is $X_{1:N}$ and $X_{N:N},$ respectively. These series and parallel system cases are called first and last activation schemes (see \cite{Cooner2007}). Here, we concentrate on the failure times of the first and last activation schemes.

Under the above set up, we assume that the failure times of the cells are interdependently distributed as $\text{Kw-W}$ with CDF of the $i$th cell being $G(x)=1-[1-\{1-exp^{-(\eta x)^{\lambda}}\}^{\alpha_i}]^{\gamma_i},$ $x,\alpha_i, \gamma_i, \lambda, \eta>0,$ for $i=1,\ldots,n.$ Here, we consider Weibull distribution as the parent CDF of the $\text{Kw-G}$ distribution.  Now, let us assume two cancer patients who are under a treatment. Suppose $\{X_1,\ldots,X_n\}$ are the incubation times for the $n$ carcinogenic cells and $N_1$ is the number of carcinogenic cells that are left active for the first patient after first chemotherapy, where $X_i\sim\text{Kw- W}(x,\alpha_{i},\gamma_{i}, \eta, \lambda)$ and $N_1\sim Poission(\lambda_1)$. Similarly, for the second patient, let $\{Y_1,\ldots,Y_n\}$ be the incubation times for the $n$ carcinogenic cells and $N_2$ be the number of carcinogenic cells that are left active, where $Y_i\sim\text{Kw- W}(x,\beta_{i},\delta_{i}, \eta,\zeta)$ and $N_2\sim Poission(\lambda_2)$. Then, under the given setup, by using Theorem \ref{th1} (Theorem \ref{th4}), we can say that the failure time of the first (last) activation scheme for the first patient is more (less) than that of the second patient when the shape parameters $\boldsymbol{\alpha}$ and $\boldsymbol{\beta}$ are connected by weakly-supermajorization order. We can similarly use other results for comparing the failure times of the first or last activation schemes.

\subsection{Transportation theory} 
Let us consider some explosives, such as lithium-ion batteries or barrels of highly reactive chemicals, that are transported over long distances. Due to temperature fluctuations or material flaws, some units may be defective and could cause accidents. Let $N$ be the number of defectives, and $X_i$, $i=1,\ldots,N,$ be the distance each defective pack can travel safely. In transportation theory, the random variable $X_{1:N}$ represents the accident-free distance of a shipment of first type explosives. Therefore, comparing the accident-free distances of shipments containing two different types of explosives is an interesting problem from a practical viewpoint. 

Under the above setting, let us consider the random accident-free distances $X_{i}$, for $i=1,\ldots,n,$ to be interdependently distributed as $\text{Kw-E}$ with CDF of the $i$th component being $G(x)=1-[1-\{1-exp^{-(\lambda x)}\}^{\alpha_i}]^{\gamma_i},$ $x,\alpha_i, \gamma_i, \lambda>0.$ Here, we consider the exponential distribution as the parent CDF of the $\text{Kw-G}$ distribution. Suppose two types of explosives $(i)$ Type A and $(ii)$ Type B are being shifted from one place to another. Let $\{X_1,\ldots,X_{N_{1}}\}$ represent the random accident-free distances corresponding to each defective (Type A) and $N_1$ be the number of defective units that could explode, where $X_i\sim\text{Kw- E}(x,\alpha_{i},\gamma_{i}, \mu_{1})$ and $N_1\sim Poission(\lambda_1)$. Also, let $\{Y_1,\ldots,Y_{N_{2}}\}$ represent the random accident-free distances corresponding to each defective (Type B) and $N_2$ be the number of defectives that could explode, where $Y_i\sim\text{Kw- E}(x,\beta_{i},\delta_{i}, \mu_2)$ and $N_2\sim Poission(\mu_2)$. Then, under the given setup, using Theorem \ref{th4}, one can easily state that the accident free distance of Type A shipment is less than that of Type B when the shape parameters $\boldsymbol{\alpha}$ and $\boldsymbol{\beta}$ are connected by weakly-supermajorization order. Similar interpretations can be provided for other theorems as well.}
\section{Concluding remarks}\label{c}
In this article, we have considered two sets of heterogeneous, interdependent random variables {$\{X_{1},\ldots,X_{N_1}\}$ and  $\{Y_{1},\ldots,Y_{N_2}\},$ where the random variables are drawn from $\text{Kw-G}$ family of distributions and also, $N_1$ and $N_2$ are two positive integer-valued random variables independently of $X_{i}'$s and $Y_{i}'$s, respectively.} Several sufficient and necessary conditions have been presented to make stochastic comparisons between two random maxima and minima in the sense of usual stochastic, hazard rate, reversed hazard rate, likelihood ratio and dispersive orders. The concept of Archimedean (survival) copula has been used for dependence of variables. Some counterexamples have been provided to understand the importance of the established sufficient conditions. {Furthermore, we have presented some applications of the established results in biostatistics and transportation theory.} Here, we have only focused on $\text{Kw-G}$ distributed dependent and heterogeneous variables to compare random maxima and minima. So, it will be interesting and challenging to work with different general families of distributions to establish comparison results between random maxima and minima. Moreover, we may also consider two sequence of $\text{Kw-G}$ dependent observations to obtain the comparison results under the same setup. 

{In reliability analysis, risk assessment and decision-making, one may often need to select the most suitable models, systems, or distributions based on their comparative behavior. In this direction, it will be natural to propose a quantitative way to distinguish them by some measures of discrimination. In the context of stochastic ordering, such measures enable us to make a decision about how strongly one system or random variable is dominated by the other. Therefore, developing measures of discrimination based on stochastic ordering results developed in this article will be an interesting research problem to consider. We are currently looking into this problem and hope to report the findings in a future paper. }
\\\\
{\bf Acknowledgement}\\The author Sangita Das gratefully acknowledges the financial support for this research work under NPDF, grant No: PDF/2022/000471, ANRF (SERB), Government of India. The authors also express their sincere thanks to the Editor and the anonymous reviewers for all their useful comments and suggestions on an earlier version of this manuscript which led to this much improved version.\\\\
{\bf Disclosure statement}\\
No potential conflict of interest was reported by the authors.

\end{document}